\newtheorem{theorem}{Theorem}
\newtheorem{lemma}{Lemma}
\theoremstyle{definition}
\newtheorem{remark}{Remark}
\newtheorem*{acknowledgments}{Acknowledgments}
\def\De{\Delta}
\def\l{\lambda}
\def\s{\sigma}
\begin{document}
\begin{titlepage}
\title{Inverse problems for Jacobi operators I :\\ Interior mass-spring perturbations in finite systems
\footnotetext{%
Mathematics Subject Classification(2010):
47B36, 
15A29, 
47A75, 
15A18,
70J50.
}
\footnotetext{%
Keywords:
Finite mass-spring system.
Jacobi matrices.
Inverse spectral problem.
}}

\author{
\textbf{Rafael del Rio}
\\
\small Departamento de M\'{e}todos Matem\'{a}ticos y Num\'{e}ricos\\[-1.6mm]
\small Instituto de Investigaciones en Matem\'aticas Aplicadas y en Sistemas\\[-1.6mm]
\small Universidad Nacional Aut\'onoma de M\'exico\\[-1.6mm]
\small C.P. 04510, M\'exico D.F.\\[-1.6mm]
\small\texttt{delrio@leibniz.iimas.unam.mx}
\\[4mm]
\textbf{Mikhail Kudryavtsev}
\\
\small Mathematical Division\\[-1.6mm]
\small Institute for Low Temperature Physics and Engineering\\[-1.6mm]
\small Lenin Av. 47, 61103\\[-1.6mm]
\small Kharkov, Ukraine\\[-1.6mm]
\small\texttt{kudryavtsev@onet.com.ua}
\\[4mm]
}
\date{}
\maketitle
\vspace{-4mm}
\begin{center}
\begin{minipage}{5in}
  \centerline{{\bf Abstract}}
\bigskip

 We consider a linear finite spring mass system which is perturbed by modifying one mass
 and adding one spring. From  knowledge of the natural frequencies of the original and
 the perturbed systems we study when masses and springs can be reconstructed. This is
 a problem about rank two or rank three type perturbations of finite Jacobi matrices
 where we are able to  describe quite explicitly  the associated Green's functions.
 We give necessary and sufficient  conditions for  two given sets of points to be
 eigenvalues of the original and  modified system respectively.

\end{minipage}
\end{center}
\thispagestyle{empty}
\end{titlepage}


\section{Introduction}

We study a problem on inverse spectral analysis of Jacobi matrices.
Our motivation is to understand the behavior of oscillating
Micro Electro Mechanical Systems  (MEMS) which are often modeled
as  $N$ masses $m_0,m_1,...,m_{N-1}$
joined by $N+1$ springs with stiffness parameters (elastic coefficients)
$k_0,k_1,...,k_N$ and equilibrium lengths  $l_0,l_1,...,l_N$.

\begin {figure}[h]
\begin{center}
\begin{tikzpicture}
  [mass0/.style={rectangle,draw=black!60,fill=black!20,thick,inner sep=0pt,
   minimum size=5.7mm},
   mass1/.style={rectangle,draw=black!60,fill=black!20,thick,inner sep=0pt,
   minimum size=7.7mm},
   mass2/.style={rectangle,draw=black!60,fill=black!20,thick,inner sep=0pt,
   minimum size=7.0mm},
   dmass/.style={rectangle,draw=black!60,fill=black!20,thick,inner sep=0pt,
   minimum size=5mm},
   wall/.style={postaction={draw,decorate,decoration={border,angle=-45,
   amplitude=0.3cm,segment length=1.5mm}}}]
  \node (mass2) at (7.1,1) [mass2] {\footnotesize$m_2$};
  \node (mass1) at (4.25,1) [mass1] {\footnotesize$\,m_1$};
  \node (mass0) at (1.8,1) [mass0] {\footnotesize$m_0$};

\draw[decorate,decoration={coil,aspect=0.4,segment
  length=2.1mm,amplitude=1.8mm}] (0,1) -- node[below=4pt]
{\footnotesize$k_0$}  (mass0);
\draw[decorate,decoration={coil,aspect=0.4,segment
  length=1.5mm,amplitude=1.8mm}] (mass0) -- node[below=4pt]
{\footnotesize$k_1$} (mass1);
\draw[decorate,decoration={coil,aspect=0.4,segment
  length=2.5mm,amplitude=1.8mm}] (mass1) -- node[below=4pt]
{\footnotesize$k_2$} (mass2);
\draw[decorate,decoration={coil,aspect=0.4,segment
  length=2.1mm,amplitude=1.8mm}] (mass2) -- node[below=4pt]
{\footnotesize$k_3$} (9.3,1);
\draw[line width=.5pt,wall](0,1.5)--(0,0.7);
\draw[line width=.5pt,wall](9.38,0.7)--(9.38,1.5);

\end{tikzpicture}
\end{center}
\caption{mass-spring system with 3 masses}\label{fig:1}
\end{figure}
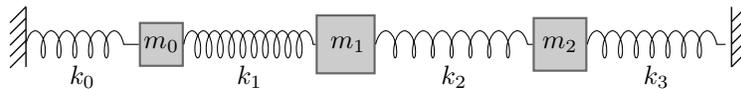

The first and last springs could be attached to fixed walls
as in Figure \ref{fig:1}, or free, in which case the model is the same just setting the elastic
coefficient of the spring at the free end  equal to zero. The masses are allowed to move in the
$x-$horizontal direction, with no friction and in absence of  external
forces. Using Hooke's and Newton's second laws, after normalization the following
equation is obtained:
\begin{equation*}
\frac{d^2}{dt^2}\vec{\nu}(t)=-J\vec{\nu}(t),
\end{equation*}
where the entry $\nu_i(t)$ of the $N$-dimensional vector $\vec{\nu}(t)$ describes
the position of mass $m_i$ at time $t$ (see \cite{Glad},\cite{Mar}) and $J$ is the Jacobi matrix
\begin{equation}
  \label{matrizJ}
 J= \begin{pmatrix}
    a_0 & b_0 & 0  &  0  &  \cdots
\\[1mm] b_0 & a_1 & b_1 & 0 & \cdots \\[1mm]  0  &  b_1  & a_2  &
\ddots &  \\
\vdots & & \ddots & \ddots &  \\
0 & 0 & \cdots & & a_{N-1}
  \end{pmatrix}
\end{equation}
with
\begin{equation}\label{klm}
 a_i=\frac{1}{m_i}(\gamma_{i+1}+\gamma_i) , \quad b_i=-\frac{\gamma_{i+1}}{\sqrt{m_i m_{i+1}}},
 \qquad \gamma_i=\frac{k_i}{l_i}
\end{equation}
(since we only use the fractions $\frac{k_i}{l_i}$, from now on we consider the elasticity parameters $\gamma_i=\frac{k_i}{l_i}$ of the springs instead of their length $l_i$ and the Hooke's coefficients $k_i$).
In the last years several experimental papers \cite{Har},\cite{Sple} were written on the possible methods
of determining micromasses with the help of oscillating microcantilevers, using the
spring-mass system approach. A possible theoretical basis for such a task is given by the paper
of Y.M.Ram \cite{Ram} (1993), who considered the inverse spectral problem of
reconstructing the Jacobi matrix \eqref{matrizJ} by its spectrum and the
spectrum of the perturbed matrix $\tilde J$ with
\begin{equation}
\tilde a_{N-1} = \frac{m_{N-1}}{\tilde m_{N-1}} a_{N-1} +\frac{ \gamma} {\tilde  m_{N-1}}, \quad
\tilde b_{N-2} = \sqrt{\frac{m_{N-1}}{\tilde m_{N-1}}} b_{N-2},
\qquad \tilde m_{N-1}>0, \ \gamma \in\mathbb{R},
\end{equation}
the other entries remaining without change. He obtained the necessary
and sufficient conditions for two point sets to be the spectra of such
a pair of matrices $J$, $\tilde J$ and provided a method of reconstructing
the matrices by the spectral data. Ram's results were partially extended
by P. Nylen and P. Uhlig in \cite{NU1},\cite{NU2} who considered the case of an analogous interior
perturbation affecting the entries $a_n, b_n, b_{n-1}$. Namely, they study
the problem of changing the mass $m_n$ by $\tilde m_n$ for a fixed $n$,
$0\leq n \leq  N-1$, and attaching  to it a new spring of elasticity parameter $k$,
 the other end of the spring being fixed:

\begin{figure}[h]
\begin{center}
\begin{tikzpicture}
  [mass1/.style={rectangle,draw=black!60,fill=black!20,thick,inner sep=0pt,
   minimum size=5.7mm},
   mass2/.style={rectangle,draw=black!60,fill=black!20,thick,inner sep=0pt,
   minimum size=7.7mm},
   mass3/.style={rectangle,draw=black!60,fill=black!20,thick,inner sep=0pt,
   minimum size=7.0mm},
   dmass/.style={rectangle,draw=black!60,fill=black!20,thick,inner sep=0pt,
   minimum size=5mm},
   wall/.style={postaction={draw,decorate,decoration={border,angle=-45,
   amplitude=0.3cm,segment length=1.5mm}}}]
  \node (mass3) at (7.1,1) [mass3] {\footnotesize$m_2$};
  \node (mass2) at (4.25,1) [mass2] {\footnotesize$\,m_1$};
  \node (mass1) at (1.8,1) [mass1] {\footnotesize$m_0$};
  \node (dmass) at (4.25,1.6) [dmass] {\scriptsize$\,\Delta m\,$};

\draw[decorate,decoration={coil,aspect=0.4,segment
  length=1.9mm,amplitude=1.8mm}] (3,1.6) -- node[above=4pt]
{\footnotesize$k$} (dmass);

\draw[decorate,decoration={coil,aspect=0.4,segment
  length=2.1mm,amplitude=1.8mm}] (0,1) -- node[below=4pt]
{\footnotesize$k_0$}  (mass1);
\draw[decorate,decoration={coil,aspect=0.4,segment
  length=1.5mm,amplitude=1.8mm}] (mass1) -- node[below=4pt]
{\footnotesize$k_1$} (mass2);
\draw[decorate,decoration={coil,aspect=0.4,segment
  length=2.5mm,amplitude=1.8mm}] (mass2) -- node[below=4pt]
{\footnotesize$k_2$} (mass3);
\draw[decorate,decoration={coil,aspect=0.4,segment
  length=2.1mm,amplitude=1.8mm}] (mass3) -- node[below=4pt]
{\footnotesize$k_3$} (9.3,1);
\draw[line width=.5pt,wall](0,1.5)--(0,0.7);
\draw[line width=.5pt,wall](3,2.1)--(3,1.3);
\draw[line width=.5pt,wall](9.38,0.7)--(9.38,1.5);

\end{tikzpicture}
\end{center}
\caption{Perturbed  mass-spring system}\label{fig:2}
\end{figure}
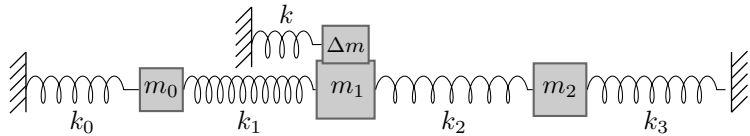

In this case the coefficients $a_n, b_{n-1}, b_{n}$ are modified as follows:
\begin{equation}\label{J pert}
\tilde a_n=\frac{1}{\tilde m_n}(\gamma_n
+ \gamma_{n+1}+\gamma)=\theta^2(a_n+M),
\quad \tilde b_{n-1}=\theta b_{n-1},
\quad \tilde b_n=\theta b_n,
\end{equation}
where
\begin{equation}\label{ga}
\theta=\theta(n):=\sqrt{\frac{m_n}{\tilde{m}_n}}, \quad M := \frac{\gamma}{m_n},\quad\gamma :=\frac{k}{l} 
\end{equation}

 \noindent$\tilde{m}_n$ denotes the perturbed mass and $l$ the  length of new spring. All other entries of $J$ remain unaltered.

The inverse problem for such a perturbation is the problem of reconstructing $J$
when we have the eigenvalues $\lambda_1<\lambda_2<\hdots<\lambda_N$
and $\tilde{\lambda}_1<\tilde{\lambda}_2<\hdots<\tilde{\lambda}_N$
of $J$  and $\tilde J$, respectively, where $\tilde J=\tilde J(n)$ is the
perturbed matrix with the changed coefficients \eqref{J pert} and all other entries the same
as $J$.\footnote{When $n=0$  we have only the two last equalities in (\ref{J pert})
since $b_{n-1}$ is not defined. Analogously if $n=N-1$ we have only the first and
last equalities in  (\ref{J pert}), since $b_n$ is not defined.}

P.Nylen and F.Uhlig obtained in \cite {NU1} necessary conditions for the spectra of the matrices
$J$ and $\tilde J$ and offered a method for reconstructing the possible
matrices in the cases where there is a finite number of solutions.
However, as it will be seen later on, their conditions are not sufficient
and the reconstruction does not give all the possible matrices.

The goal of this paper is to develop the direct and inverse spectral theory for the
interior perturbations of the described type \eqref{J pert}, to obtain the necessary
and sufficient conditions for the spectral data and the complete description
of the possible Jacobi matrices with such spectral data, providing an
explicit algorithm of reconstruction.

The main results are theorems \ref{CondNecSuf} (necessary and sufficient conditions)
and \ref{manifolds} (description of the possible matrices). The algorithm of reconstruction
is given in their proof. In forthcoming papers  we plan to give
the complete solution of this inverse problem for seminfinite matrices.

We shall use the following notations
\begin{equation}\label{DeltaM}
\Delta m_n = \tilde m_n-m_n,  \quad K := \frac{\gamma}{\De m_n}
\end{equation}
and assume that $\Delta m_n\geq 0$, i.e. the perturbed mass $\tilde m_n$ is greater than the initial mass $m_n$.

We shall have  the spectra of the initial and the perturbed Jacobi matrices
$\sigma (J)=\{\lambda _1,\lambda _2, ...,\lambda_N \}$,
$\sigma (\tilde J)=\{\tilde \lambda _1,\tilde \lambda _2, ...,\tilde\lambda_N \}$,
a given integer number $n\in\{0,\ldots,N-1\}$ which indicates the place the mass is modified and the parameter of
the perturbation $K=\frac{\gamma}{\Delta m_n}$ as the spectral data for the inverse problem of reconstructing $J$
and $\tilde J$. Notice that multiplying all the masses $m_i$ and elasticity
parameters $\gamma_i$ in \eqref{klm} by the same constant does not change the Jacobi matrix
\eqref{matrizJ} and the frequency characteristics of the free oscillations, so we obtain a
mechanically ``equivalent'' system. Thus, we cannot reconstruct exactly the masses and the elasticity
parameters from the frequencies. However, their fractions $\frac{m_i}{m_0}$, $\frac{\gamma_i}{\gamma_0}$,
$\frac{m_n}{\tilde m_n}=\theta^2$ , $\frac{\gamma}{m_n}=M$ will be found. That is the reason why we will
only work with fractional parameters of perturbation $\theta^2$, $M$, $K$ instead of the masses $m_i, \tilde m_n$
and elasticity parameters $\gamma_i, \gamma$.

Taking into account \eqref{ga} and \eqref{DeltaM}, we immediately have
\begin{equation}\label{k entre m'}
M = (\theta^{-2}-1) K,
\end{equation}
thus we always know one of the three parameters of perturbation $\theta^2$, $M$, $K$ from the two others.

We should remark that there is a huge variety of inverse spectral problems and many of them have been deeply
studied. We refer the reader to the books \cite{Chu-G} and \cite{Glad} for valuable
information on this important  subject.

\section{Preliminaries}

The eigenvector $\vec{c}$ of the matrix $J$, which satisfies
\begin{equation}\label{Jacobi}
J\vec{c}=\lambda\vec{c},
\end{equation}
can be written after normalization as
$\vec{c}=(P_0(\lambda),P_1(\lambda),P_2(\lambda),\hdots,P_{N-1}(\lambda))^T$
with $P_0(\lambda)=1$ where $P_i(\lambda)$ is a polynomial in $\lambda$ of
degree $i\in\{0,1,\hdots,N-1\}$.
From (\ref{Jacobi}) we get the expressions:

\begin{align}\label{polys}
a_0+b_0P_1(\lambda)&=\lambda \nonumber, \\
... \nonumber \\
b_{i-1}P_{i-1}(\lambda)+a_iP_i(\lambda)+b_iP_{i+1}&=\lambda P_i(\lambda),  \\
...  \nonumber \\
b_{N-2} P_{N-2}(\lambda)+a_{N-1}P_{N-1}(\lambda)&=\lambda P_{N-1}(\lambda). \nonumber
\end{align}

Therefore the polynomials $P_i$ are defined by the conditions
\begin{equation}\label{P}
P_{i+1}=\frac{1}{b_i}\{(\lambda-a_i)P_i-b_{i-1}P_{i-1}\},
\qquad P_{-1} =0, \qquad P_0=1,
\end{equation}
$i=0,...,N-2$.
It follows from the last equation in (\ref{polys}), that if $\lambda$ is an eigenvalue
then $Q_N(\lambda)=0$, where we define
\begin{equation}\label{Q}
Q_N(\lambda):=\lambda P_{N-1}-a_{N-1}P_{N-1}(\lambda)-b_{N-2}P_{N-2}(\lambda),
\end{equation}
and each root of the $N-$degree polynomial $Q_N$ is an eigenvalue of $J$, too. The polynomial
$Q_N$ is equal to  the characteristic polynomial of $J$ times a constant.

Let $P_i$ and $Q_N$ be defined as in (\ref{polys}) , (\ref{Q}) and let $\tilde P_i$, $\tilde Q_N$
be the corresponding polynomials for the perturbed operator $\tilde J$. We now try to get an
expression that relates $Q_N$ and $\tilde Q_N$.

Fix $n\in\{0,1,\hdots,N-1\}$
and  define the polynomials $\varphi_i$ for  $n\leq i\leq N $ as follows:
\begin{equation}\label{feo}
\varphi_n =0 \qquad \varphi_{n+1} =\frac{1}{b_n},
\qquad
\varphi_i =\frac {1}{b_{i-1}} \left\{(\lambda-a_{i-1})\varphi_{i-1}-b_{i-2}\varphi_{i-2} \right \},
\end{equation}
where we set $b_{N-1}=1$. Note that the degree of the polynomial $\varphi_i$ is $i-n-1$.
These are called polynomials of second kind while $P_i$ are called of
first kind, see \cite {Ak}.

\begin{lemma}\label{QtildeQ}
With the definition of $\varphi_i$ given above we have for any $n\in\{0,1,\hdots,N-1\}$
 \begin{equation}\label{cu}
 \tilde Q_N =\Gamma(n)\Big(Q_N +A\varphi _N P_n\Big)
 \end{equation}
where $A=\lambda (\theta^{-2}-1)-M $ and $\Gamma(n)=\theta(n)$
(see (\ref{ga})) if $n\in\{0,N-1\}$  and $\Gamma(n)=1 $ otherwise.
\end {lemma}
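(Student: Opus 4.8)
The plan is to track how the recursion that defines the polynomials $\tilde P_i$ differs from the one for $P_i$, and to localize that difference to the single index $i=n$. For $i<n$ the matrices $J$ and $\tilde J$ agree, so $\tilde P_i=P_i$ for all $0\leq i\leq n$; in particular $\tilde P_n=P_n$. The perturbation \eqref{J pert} enters only through the three entries $a_n,b_{n-1},b_n$, which become $\tilde a_n=\theta^2(a_n+M)$, $\tilde b_{n-1}=\theta b_{n-1}$, $\tilde b_n=\theta b_n$. First I would compute $\tilde P_{n+1}$ from \eqref{P}: using $\tilde P_{n-1}=P_{n-1}$ and $\tilde P_n=P_n$,
\begin{equation*}
\tilde b_n\tilde P_{n+1}=(\lambda-\tilde a_n)\tilde P_n-\tilde b_{n-1}\tilde P_{n-1}
=(\lambda-a_n)P_n-b_{n-1}P_{n-1}+\bigl(a_n-\tilde a_n\bigr)P_n,
\end{equation*}
and since $(\lambda-a_n)P_n-b_{n-1}P_{n-1}=b_nP_{n+1}$ and $a_n-\tilde a_n=a_n-\theta^2(a_n+M)=(1-\theta^2)a_n-\theta^2 M$, while $\tilde b_n=\theta b_n$, this gives an expression of the form $\theta b_n\tilde P_{n+1}=b_nP_{n+1}+(\text{something})P_n$. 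A short rearrangement, using $\lambda=a_n+b_nP_{n+1}/P_n$ only as a bookkeeping device or, cleaner, writing $a_n=\lambda-(b_nP_{n+1}+b_{n-1}P_{n-1})/P_n$ — actually it is simplest to keep $a_n$ and observe that the extra term is exactly $A\,P_n$ with $A=\lambda(\theta^{-2}-1)-M$ after dividing through by $\theta$: one checks $\theta b_n\tilde P_{n+1}=b_n P_{n+1}+\theta b_n\varphi_{n+1}\cdot A\,P_n$ since $\varphi_{n+1}=1/b_n$, i.e. $\tilde P_{n+1}=\theta^{-1}\bigl(P_{n+1}+A\varphi_{n+1}P_n\bigr)$. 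I will verify this identity by direct substitution; it is the only place the numbers $\theta^{2},M$ actually get used.

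Next I would prove by induction on $i$, for $n+1\leq i\leq N-1$, the formula
\begin{equation*}
\tilde P_i=\theta^{-1}\bigl(P_i+A\varphi_i P_n\bigr).
\end{equation*}
The base case $i=n+1$ is the computation above, and $i=n$ holds trivially in the degenerate sense $\varphi_n=0$, $\tilde P_n=P_n$ (here one keeps $\theta^{-1}$ absorbed, or rather treats $i=n$ separately). For the inductive step, for $i\geq n+1$ the entries $a_i,b_i,b_{i-1}$ of $\tilde J$ coincide with those of $J$, so $\tilde P_{i+1}$ satisfies the \emph{same} three-term recursion \eqref{P} as $P_{i+1}$; feeding in the inductive hypotheses for $\tilde P_i$ and $\tilde P_{i-1}$, the $P$-parts reassemble into $P_{i+1}$ and the $\varphi$-parts reassemble into $\varphi_{i+1}$ precisely because $\varphi_i$ obeys the recursion \eqref{feo} with the same coefficients $a_{i-1},b_{i-1},b_{i-2}$ (note the index shift in \eqref{feo} is harmless here since $i-1\geq n$). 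The common factor $\theta^{-1}$ and the common multiplier $AP_n$ pass through the linear recursion untouched. This step is routine once the recursions are lined up, but one must be careful about the index ranges so that no perturbed entry is inadvertently used.

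Finally I would convert the statement about $\tilde P_{N-1},\tilde P_{N-2}$ into one about $\tilde Q_N$ via \eqref{Q}. When $n\leq N-2$ the entries $a_{N-1},b_{N-2}$ are unperturbed, so
\begin{equation*}
\tilde Q_N=(\lambda-a_{N-1})\tilde P_{N-1}-b_{N-2}\tilde P_{N-2}
=\theta^{-1}\Bigl[(\lambda-a_{N-1})P_{N-1}-b_{N-2}P_{N-2}+A\bigl((\lambda-a_{N-1})\varphi_{N-1}-b_{N-2}\varphi_{N-2}\bigr)P_n\Bigr].
\end{equation*}
The first bracketed group is $Q_N$ by \eqref{Q}, and the second is $A\varphi_N P_n$ by \eqref{feo} with the convention $b_{N-1}=1$ (so that $\varphi_N=(\lambda-a_{N-1})\varphi_{N-1}-b_{N-2}\varphi_{N-2}$). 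Hmm — this yields $\tilde Q_N=\theta^{-1}(Q_N+A\varphi_N P_n)$, whereas the claim has $\Gamma(n)=1$ for interior $n$. The resolution is that the $\theta^{-1}$ I have been carrying is an artifact of having normalized $\tilde P_0=1$; the $\tilde P_i$ in \eqref{feo}–\eqref{Q} are genuinely the monic-type polynomials for $\tilde J$, and for $n\notin\{0,N-1\}$ the correct reconciliation makes the scalar $1$. I would pin this down by noting that $\tilde Q_N$ and $Q_N$ are both normalized the same way (each is the characteristic polynomial up to the product of the $b$'s, and $\prod\tilde b_i=\theta\prod b_i$ when exactly one of $b_{n-1},b_n$ ... no — \emph{two} of them, $\tilde b_{n-1}\tilde b_n=\theta^2 b_{n-1}b_n$, giving $\theta^2$, which cancels the $\theta^{-1}$ twice over — so more care is needed). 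For $n=0$ or $n=N-1$ only one $b$ is scaled (by $\theta$), which is exactly where the factor $\Gamma(n)=\theta(n)$ survives; for interior $n$ the two factors of $\theta$ from $\tilde b_{n-1}\tilde b_n$ versus the single $\theta^{-1}$ bookkeeping factor are reconciled so that $\Gamma(n)=1$. Getting this normalization constant exactly right — carefully accounting for how many of the $b$-entries are rescaled and how $\tilde Q_N$ is normalized relative to the characteristic polynomial — is the one genuinely delicate point, and where I expect the main obstacle to lie; the rest is a clean double induction.
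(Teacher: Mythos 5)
Your overall strategy is the same as the paper's (localize the perturbation at index $n$, prove $\tilde P_i=P_i+A\varphi_iP_n$ for $i\geq n+1$ by the three-term recursion, then assemble $\tilde Q_N$ from $\tilde P_{N-1},\tilde P_{N-2}$), but the base of your induction contains a concrete error, and it is exactly the source of the ``delicate normalization'' you admit you cannot resolve at the end. You assert $\tilde P_i=P_i$ for all $0\leq i\leq n$, ``in particular $\tilde P_n=P_n$.'' This is false for $n\geq 1$: the recursion \eqref{P} produces $P_n$ by \emph{dividing by} $b_{n-1}$, and $b_{n-1}$ is one of the perturbed entries, so
\begin{equation*}
\tilde P_n=\frac{1}{\tilde b_{n-1}}\bigl\{(\lambda-a_{n-1})P_{n-1}-b_{n-2}P_{n-2}\bigr\}=\frac{1}{\theta}\,P_n .
\end{equation*}
(Your displayed line also silently replaces $\tilde b_{n-1}\tilde P_{n-1}=\theta b_{n-1}P_{n-1}$ by $b_{n-1}P_{n-1}$, a second slip.) With the correct value $\tilde P_n=\theta^{-1}P_n$ one gets
\begin{equation*}
\tilde P_{n+1}=\frac{1}{\theta b_n}\Bigl\{\bigl(\lambda-\theta^2(a_n+M)\bigr)\frac{P_n}{\theta}-\theta b_{n-1}P_{n-1}\Bigr\}
=P_{n+1}+\frac{A}{b_n}P_n=P_{n+1}+A\varphi_{n+1}P_n ,
\end{equation*}
with no prefactor at all: the $\theta^{-1}$ carried by $\tilde P_n$ and the $\theta$'s from $\tilde b_n$ and $\tilde b_{n-1}$ cancel exactly. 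From there your (otherwise correct) induction and the final step give $\tilde Q_N=Q_N+A\varphi_NP_n$ directly, i.e.\ $\Gamma(n)=1$ for interior $n$, and there is nothing left to ``reconcile.''

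The spurious $\theta^{-1}$ you carry is therefore an artifact of the wrong base case, not of the normalization of $\tilde P_0$ or of $\tilde Q_N$ against the characteristic polynomial; your closing discussion of how many $b$'s get rescaled does not repair the computation, and you never actually carry out the boundary cases $n=0$ and $n=N-1$, which are precisely where the factor $\Gamma(n)=\theta$ genuinely survives (for $n=0$ one finds $\tilde P_i=\theta(P_i+A\varphi_i)$ since there is no $b_{-1}$, and for $n=N-1$ one has $\tilde P_{N-1}=\theta^{-1}P_{N-1}$ with the perturbed $\tilde a_{N-1},\tilde b_{N-2}$ entering the definition of $\tilde Q_N$ itself). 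These cases require the separate short computations that the paper supplies.
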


\begin{proof}
a) Case $0<n<N-1$. First we shall prove
\begin{equation}\label{efe}
\tilde P_{i} = \varphi_i A P_n +P_{i}
\end{equation}
for $n+1\leq i\leq N-1$.

Using (\ref {J pert}) and the recurrence relations (\ref{polys}) for $P_j$ and
$\tilde P_j$ we obtain
 \begin {align*} \tilde P_j &= P_j, \qquad\mbox{ if } \qquad j<n\\
 \tilde P_n&=\frac{1}{\theta}  P_n, \\
 \tilde P_{n+1}&=\frac{1}{\tilde b_n}\left\{(\lambda -\tilde a_n)
\tilde  P_n - \tilde b_{n-1} \tilde P_{n-1}\right\} \\
&=\frac {1}{\theta b_n}\left\{\left(\lambda -\theta ^2(a_n+ M)\right)\frac{ P_n }{\theta } -
\theta b_{n-1}P_{n-1}\right\}  \\
 &= \frac {1}{b_n}P_n \left ( \lambda (\frac {1}{\theta ^2} -1) -M\right )  +P_{n+1}. \\
\end {align*}
To obtain the last equation we added and subtracted
$ \lambda P_{n}- \frac{\lambda}{\theta^2 } P_{n}$. Therefore
\begin{align}
\tilde P_{n+1} &=\frac{1}{b_n} AP_n +P_{n+1} \label {Pi+1} \\
               &= \varphi _{n+1} AP_n +P_{n+1},
\end{align}
so (\ref{efe}) holds with $i=n+1$.

For $i=n+2$ we have
\begin{align*} \tilde P_{n+2}
&=\frac {1}{b_{n+1}}\left \{(\lambda -a_{n+1})\tilde P_{n+1}-b_{n}P_{n}\right\}\\
&=\varphi_{n+2}AP_n  +P_{n+2}
\end{align*}
so (\ref{efe}) holds in this case. Assume (\ref{efe}) holds for $i-2$ and $i-1$ where $i-2\geq n+1$
and let us prove it holds for $i$. First,
$$
\tilde P_{i}=\frac{1}{b_{i-1}}\left\{ (\lambda -a_{i-1})\tilde P_{i-1}-b_{i-2}\tilde P_{i-2}\right\}
$$
see (\ref{polys}), where the coefficients $a_{i-1},b_{i-1}$ and $b_{i-2}$ are unperturbed since $i\geq n+3$.
Therefore
\begin{align*}
\tilde P_i&=\frac{1}{b_{i-1}} \left\{
(\lambda -a_{i})(\varphi_{i-1}AP_n+P_{i-1})-b_{i-2}(\varphi_{i-2}AP_i+P_{i-2})\right\} \\
 &=\varphi_i AP_n +P_i
 \end{align*}
 and (\ref {efe}) holds. To prove (\ref{cu}) recall that
\begin{align*}
\tilde Q_N &=(\lambda -a_{N-1})\tilde P_{N-1}-b_{N-2}\tilde P_{N-2}\\
&=(\lambda - a_{N-1})( \varphi_{N-1}AP_n+P_{N-1})-b_{N-2}(  \varphi_{N-2}AP_n+P_{N-2}) \\
&=\varphi_NAP_n+Q_N
\end{align*}
and (\ref{cu}) is proven.

b) Case $n=0$. Note that $\tilde P_0=P_0 =1$. Using the first equality in (\ref{polys})
\begin{align*}
\tilde P_1= \frac{1}{\tilde b_o} (\lambda - \tilde a_0)
&=\frac{1}{\theta b_0}\left\{\lambda -\theta ^2(a_0+M)\right\} \\
   &=\theta \left\{P_1+ \varphi_1 A \right\}.
\end{align*}
Analogously we have
$$ \tilde P_2 =\theta \left\{P_2+ \varphi_2 A \right\}
$$
Using induction as in case a) we obtain $\tilde P_i =\theta \left\{P_i+ \varphi_i A \right\}$ for
$i=N-1 , N-2$ and  then $\tilde Q_N =\theta\Big(Q_N +A\varphi _NP_0\Big)$.

c) Case $n=N-1$. From (\ref{polys}) we know that
\begin{equation*}
P_{N-1}=\frac{1}{b_{N-2}} \lbrace(\lambda -a_{N-2})P_{N-2}  -b_{N-3}P_{N-3} \rbrace
\end{equation*}
and
\begin{equation*}
\tilde{P}_{N-1}=\frac{1}{\tilde b_{N-2}} \lbrace(\lambda
-a_{N-2})P_{N-2}  -b_{N-3}P_{N-3}\rbrace=\frac{b_{N-2}}{\tilde{b}_{N-2}}P_{N-1}=\frac{1}{\theta}P_{N-1}
\end{equation*}
Therefore
\begin{equation}
\tilde{P}_{N-1}=\frac{1}{\theta}P_{N-1}
\end{equation}

Analogously and considering  (\ref{Q}), we find that
\begin{align*}
\tilde{Q}_N(\lambda )&=(\lambda-\tilde{a}_{N-1})\tilde{P}_{N-1}
(\lambda )-\tilde{b}_{N-2}P_{N-2}(\lambda ) \\
  &=(\lambda-\theta^2 (a_{N-1}+M))
  \frac{1}{\theta}P_{N-1}-\theta b_{N-2}P_{N-2}
\end{align*}

Adding and substracting $\theta \lambda P_{N-1}- \frac{\lambda}{\theta } P_{N-1}$ we get
$$
=\theta Q_N(\lambda)+\theta  (\lambda(\frac{1}{\theta^2}-1)-M)P_{N-1}(\lambda)
$$

Therefore
$$
\tilde Q_N =\theta\Big(Q_N +A\varphi _NP_{N-1}\Big)
$$

Observe that $\varphi_N =1$ if $n=N-1$.
\end{proof}

Let us define the $jj$ Green's function
$$ G(z,j,j):= \langle \delta _j, (J-z)^{-1} \delta _j\rangle
$$
where $\langle.\rangle$ denotes scalar product and $\delta_j(i) = 1 $ if $i=j$ and
$0$ if $i\not=j $, $i,j\in\{0,1,\hdots,N-1\}$.

\begin{lemma}\label{Green} For  $0\leq n\leq N-1$
$$G(z,n,n)= -\frac{\varphi_N(z)P_n(z)}{Q_N(z)} $$
\end {lemma}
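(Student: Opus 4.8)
The plan is to compute the diagonal Green's function $G(z,n,n)$ directly from the definition by solving the linear system $(J-z)\vec{u} = \delta_n$ and reading off the $n$-th coordinate. The key observation is that the general solution of the recurrence $(J-z)\vec{u}=0$ away from the inhomogeneity is spanned by the first-kind polynomials $P_i(z)$ (which satisfy the correct initial condition at $i=0$) and a second solution that is built to satisfy the correct boundary condition at $i=N-1$; the polynomials $\varphi_i$ introduced in \eqref{feo} are precisely the raw material for that second solution. Concretely, I would set $u_i = \alpha P_i(z)$ for $0\le i\le n$ and $u_i = \beta\,\psi_i(z)$ for $n\le i\le N-1$, where $\psi_i$ is the solution of the three-term recurrence \eqref{P} satisfying the "right-end" boundary condition coming from the last line of \eqref{polys} (equivalently, $\psi_i$ is proportional to the characteristic polynomial of the truncated block of $J$ acting on coordinates $i,\dots,N-1$, suitably normalized). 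Matching the two pieces at $i=n$ forces $\alpha P_n = \beta\psi_n$, and the jump condition produced by the $\delta_n$ on the right-hand side — namely the single equation at row $n$, $b_{n-1}u_{n-1}+(a_n-z)u_n+b_n u_{n+1} = 1$ — determines the remaining constant via a Wronskian-type identity.

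The main technical step is to identify the Wronskian $W := b_i\big(P_i\psi_{i+1}-P_{i+1}\psi_i\big)$, which is independent of $i$ because both $P$ and $\psi$ solve the same recurrence \eqref{P}, and to evaluate it. Evaluating $W$ at $i=N-1$ (using $b_{N-1}=1$ and the boundary normalization of $\psi$) shows $W = -Q_N(z)$ up to the normalization of $\psi$; on the other hand, comparing the recurrences \eqref{feo} for $\varphi$ with \eqref{P} for $\psi$ shows that $\varphi_i$ and $\psi_i$ are proportional with $\psi_i = c\,\varphi_i$ for a constant $c$ depending only on the right-end data, and in fact with the chosen normalizations one gets $\psi_N = Q_N$-compatible scaling so that $\varphi_i$ itself is the correctly normalized second solution with $\varphi_N$ playing the role of the "evaluation at the boundary." Putting the pieces together, $u_n = \alpha P_n = -\varphi_N(z)P_n(z)/Q_N(z)$, which is exactly the claim since $G(z,n,n)=\langle\delta_n,(J-z)^{-1}\delta_n\rangle = u_n$.

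The step I expect to be the real obstacle is the bookkeeping of normalizations and signs: making sure that the second-kind solution built from $\varphi_i$ is scaled so that the Wronskian comes out exactly as $-Q_N$ (not $Q_N$ or a constant multiple), and checking the boundary cases $n=0$ and $n=N-1$ where $b_{n-1}$ or $b_n$ is undefined and the matching argument degenerates to a single interval. For $n=0$ one takes $u_i=\beta\psi_i$ on all of $\{0,\dots,N-1\}$ and imposes the inhomogeneous row-$0$ equation directly; for $n=N-1$ one uses $u_i=\alpha P_i$ on all of $\{0,\dots,N-1\}$ together with the (perturbed-free) last row, and the identity $\varphi_N=1$ noted at the end of the proof of Lemma \ref{QtildeQ} makes the formula collapse to the known expression $G(z,N-1,N-1) = -P_{N-1}(z)/Q_N(z)$. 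An alternative, slicker route that avoids some of this is to exploit Lemma \ref{QtildeQ} itself: the resolvent identity expressing $\tilde Q_N/Q_N$ as $1 + A\varphi_N P_n/Q_N$ together with the rank-one structure of the perturbation at site $n$ identifies $-\varphi_N P_n/Q_N$ as the coefficient of the perturbation, i.e. as $G(z,n,n)$; I would present the direct Wronskian computation as the main argument and mention this consistency check.
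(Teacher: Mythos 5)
Your overall strategy --- building the resolvent from a left solution $P_i$ and a right solution $\psi_i$ of the three-term recurrence and dividing by their Wronskian --- is a legitimate classical route and is genuinely different from the paper's, which instead identifies $P_n$, $\varphi_N$ and $Q_N$ with the normalized characteristic polynomials $\det(\lambda-J_{[0,n-1]})$, $\det(\lambda-J_{[n+1,N-1]})$, $\det(\lambda-J)$ and then applies Cramer's rule to $(z-J)u=\delta_n$ together with a cofactor expansion of $\det(z-J)_n$. Carried out correctly, your computation does close: with $u^+_{N-1}=1$ and $u^+_{N-2}=(z-a_{N-1})/b_{N-2}$ one gets $W=-Q_N$, and the jump condition gives $G(z,n,n)=P_n u^+_n/W$, so the method is viable.

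The genuine gap is the assertion that ``$\varphi_i$ and $\psi_i$ are proportional, $\psi_i=c\,\varphi_i$.'' This is false: $\varphi_i$ is the solution launched from the left end of the block at site $n$ (it satisfies $\varphi_n=0$, $\varphi_{n+1}=1/b_n$), while $\psi_i$ is anchored at the right boundary; their Wronskian at $i=n$ is $b_n(\varphi_n\psi_{n+1}-\varphi_{n+1}\psi_n)=-\psi_n$, which is nonzero for generic $z$, so the two sequences are linearly independent. What your argument actually needs is not proportionality of the sequences but the single scalar identity $u^+_n=\varphi_N$ under the normalization $u^+_i=\det(z-J_{[i+1,N-1]})/(b_i\cdots b_{N-2})$, i.e.\ the fact that $\varphi_N$, obtained by running the recurrence upward from site $n$, computes $\det(z-J_{[n+1,N-1]})/(b_n\cdots b_{N-2})$. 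That is exactly the paper's formula \eqref{feaf} and must be established separately (by induction on the block, as the paper does); it does not follow from ``comparing the recurrences.'' Relatedly, your parenthetical normalization ``$\psi_i\propto\det$ of the block on coordinates $i,\dots,N-1$'' is off by one (the block should be $i+1,\dots,N-1$), and if uncaught this would corrupt the final formula, replacing $\varphi_N\propto\det(z-J_{[n+1,N-1]})$ by $\det(z-J_{[n,N-1]})$. Finally, the proposed shortcut via Lemma \ref{QtildeQ} is not a proof: the identification of $-\varphi_NP_n/Q_N$ with $G(z,n,n)$ is precisely the content of Lemma \ref{Green}, and Lemma \ref{QtildeQ} by itself only supplies the perturbation-determinant identity, not its equality with the Green's function.
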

\begin{proof}
From the definition of polynomials $P_n$  see (\ref{polys}) or (\ref{P}) we know that,
for $n\geq 1$, $P_n(z)=0$ if and only if $z$ is an eigenvalue of $J_{[0,n-1]}$ the $n\times n$
upper left corner of $J$, that is:
\begin{equation*}
J_{[0,n-1]}=
 \begin{pmatrix}
  a_0    &       b_0                &             0            &        \cdots
   &
  \cdots         \\
  b_0    &       a_1                &           b_1         &       \cdots               &
   0         \\
   0        &       b_1                &           a_2         &        \cdots              &
   0         \\
                     \vdots         &               \vdots
                     &
                     \vdots          &                                 &        \vdots        \\
                             0         &                       0
                             &                                    0         &
                             \cdots               &     a_{n-1}       \\
\end{pmatrix}
\end{equation*}

Inductively from the definition it follows that
$$P_n(\lambda )=\frac{1}{b_0 b_1 \dots b_{n-1}}\lambda^n +\mbox{lower degree in } \lambda
$$
Since $b_0 b_1 \dots b_{n-1}P_n(\lambda )$ and $\det(\lambda -J_{[0,n-1]})$ are monic
polynomials
 of degree $n$ and with the same zeros, they are equal, i.e.
\begin{equation}\label{fea}
P_n(\lambda )=\frac{1}{b_o b_1 \dots b_{n-1}}\det(\lambda -J_{[0,n-1]})
\end {equation}

Analogously from (\ref{feo}) it follows inductively that
$$\varphi_N(\lambda)=\frac{1}{b_{N-2} \dots b_n}\lambda ^{N-n-1} +\mbox{lower degree in } \lambda
$$
(recall $b_{N-1}:=1$).
 If we define  the matrix , $n\leq N-2$,
 \begin{equation*}
J_{[n+1,N-1]}=
 \begin{pmatrix}
  a_{n+1}    &       b_{n+1}                &             0
  &        \cdots
  &          \cdots         \\
  b_{n+1}    &       a_{n+2}                &           b_{n+2}
  &       \cdots               &
  0         \\
   0        &       b_{n+2}                &           a_{n+3}
   &        \cdots              &
   0         \\
                     \vdots         &               \vdots
                     &
                     \vdots          &                                 &        \vdots        \\
                             0         &                       0
                             &                                    0         &
                              \cdots
                             &     a_{N-1}       \\
\end{pmatrix},
\end{equation*}
then $\varphi_i$ is related to the matrix $J_{[n+1,N-1]}$ in the same way as
$P_i$, defined by (\ref{polys}),
is related to the matrix $J$. In fact the $\varphi_i$
are the $P_i$ for the matrix $J_{[n+1,N-1]}$, multiplied by $\frac{1}{b_n}$.

We get similarly as (\ref{fea})
\begin{equation}\label{feaf}
\varphi_N(\lambda )=\frac{1}{b_n \dots b_{N-2}}\det(\lambda -J_{[n+1,N-1]})
\end {equation}
and
\begin{equation}\label{feaff}
Q_N(\lambda )=\frac{1}{b_0 \dots b_{N-2}}\det(\lambda -J)
\end {equation}
Therefore, if $1\leq n\leq N-2$,
\begin{equation}\label{feaffi0}
\frac{\varphi_N(\lambda ) P_n (\lambda )}{Q_N(\lambda )}=
\frac{\det(\lambda-J_{[0,n-1]})\det(\lambda - J_{[n+1,N-1]})}{\det(\lambda -J)} .
\end {equation}
When $n=0$
\begin{equation}\label{feaffi1}
\frac{\varphi_N(\lambda ) P_0 (\lambda )}{Q_N(\lambda )}=
\frac{\det(\lambda - J_{[1,N-1]})}{\det(\lambda -J)}
\end {equation}
and for $n=N-1$
\begin{equation}\label{feaffi2}
\frac{\varphi_N(\lambda ) P_{N-1} (\lambda )}{Q_N(\lambda )}=
\frac{\det(\lambda-J_{[0,N-2]})}{\det(\lambda -J)} .
\end {equation}
Recall $\varphi_N(\lambda )=1$ if $n=N-1$.

According to Cramer's rule the solution of $(z-J)u=\delta_n$ is the vector
$u=\left( u(0),u(1),...,u(N-1)\right)^T$ with
$$u(j)=\frac{\det(z-J)_j}{\det(z-J)}, \qquad j=0,...,N-1,
$$
where $(z-J)_j$ is
the matrix $z-J$ with the j-column substituted by $\delta_n$.
Since $u(n)=\langle \delta_n , (z-J)^{-1} \delta _n\rangle = -G(z,n,n)$, we get
\begin{equation}\label{feafff}
-G(z,n,n)=\frac{\det(z-J)_n}{\det(z-J)}
\end{equation}
Now observe $\det(z-J)_n =\det(\lambda-J_{[0,n-1]})\det(\lambda - J_{[n+1,N-1]})$ when $1\leq n\leq N-2$.
This can be checked
expanding the determinant on the left side by cofactors with respect to the $n$-th column.
When $n=0 , N-1$ we get $\det(z-J)_0 =\det(\lambda - J_{[1,N-1]})$ and
$\det(z-J)_{N-1} =\det(\lambda - J_{[0,N-2]})$, respectively.
Therefore from formulas (\ref{feaffi0}) to (\ref{feafff}) the
theorem follows.
\end{proof}

With the help of the two lemmas proved above, now we are ready to give an explicit formula for
the Green function at a point $nn$.

\begin{theorem}

\begin{equation}\label{GG}
G(\lambda,n,n) = \frac{1}{1-\theta^2} \Bigl\{
\frac{\theta^2}{\lambda-K}-\frac{1}{\lambda-K}
\Big(\frac{\prod_{j=1}^N (\lambda-\tilde{\lambda}_j)}{\prod_{j=1}^N (\lambda-{\lambda}_j)}\Big)\Bigr\},
\end{equation}

\end{theorem}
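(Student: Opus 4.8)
The plan is to chain the two lemmas already proved. From Lemma~\ref{Green} we have $G(\lambda,n,n) = -\varphi_N(\lambda)P_n(\lambda)/Q_N(\lambda)$, so it suffices to extract the product $\varphi_N P_n$ from the identity \eqref{cu} of Lemma~\ref{QtildeQ}. Solving \eqref{cu} for $\varphi_N P_n$ gives
\[
\varphi_N P_n = \frac{1}{A}\Bigl(\frac{\tilde Q_N}{\Gamma(n)} - Q_N\Bigr),
\]
and therefore, as an identity of rational functions of $\lambda$ (with $A=\lambda(\theta^{-2}-1)-M$),
\[
G(\lambda,n,n) = \frac{1}{A}\Bigl(1 - \frac{\tilde Q_N(\lambda)}{\Gamma(n)\,Q_N(\lambda)}\Bigr).
\]

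Second, I would rewrite the ratio $\tilde Q_N/Q_N$ in terms of the two spectra. By \eqref{feaff}, $Q_N(\lambda)=(b_0\cdots b_{N-2})^{-1}\det(\lambda-J)=(b_0\cdots b_{N-2})^{-1}\prod_{j=1}^N(\lambda-\lambda_j)$, and the analogous formula holds for $\tilde Q_N$ with the entries $\tilde b_i$ and the eigenvalues $\tilde\lambda_j$. The perturbation rule \eqref{J pert} rescales $b_{n-1}$ and $b_n$ by the factor $\theta$ each in the interior case $0<n<N-1$, and rescales the single surviving off-diagonal entry by $\theta$ in each boundary case $n\in\{0,N-1\}$; inspecting the three cases one finds in every case that $\tilde b_0\cdots\tilde b_{N-2}=\theta^2\,\Gamma(n)^{-1}\,b_0\cdots b_{N-2}$, with $\Gamma(n)$ as defined in Lemma~\ref{QtildeQ}. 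Consequently
\[
\frac{\tilde Q_N(\lambda)}{\Gamma(n)\,Q_N(\lambda)} = \frac{1}{\theta^2}\,\frac{\prod_{j=1}^N(\lambda-\tilde\lambda_j)}{\prod_{j=1}^N(\lambda-\lambda_j)}
\]
uniformly in $n$.

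Third, I would simplify the prefactor $1/A$ using the relation \eqref{k entre m'}, namely $M=(\theta^{-2}-1)K$. This yields $A=(\theta^{-2}-1)(\lambda-K)=\frac{1-\theta^2}{\theta^2}(\lambda-K)$, so $1/A=\theta^2/[(1-\theta^2)(\lambda-K)]$. Substituting this together with the displayed expression for $\tilde Q_N/(\Gamma(n)Q_N)$ into the formula for $G$ and distributing the product gives
\[
G(\lambda,n,n) = \frac{\theta^2}{(1-\theta^2)(\lambda-K)} - \frac{1}{(1-\theta^2)(\lambda-K)}\,\frac{\prod_{j=1}^N(\lambda-\tilde\lambda_j)}{\prod_{j=1}^N(\lambda-\lambda_j)},
\]
which is exactly \eqref{GG}.

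The whole argument is essentially bookkeeping; the only step requiring genuine care is the second one, where one must check the cancellation of the off-diagonal normalizing constants $b_0\cdots b_{N-2}$ against the factor $\Gamma(n)$ separately in the interior and boundary cases. This is precisely where the convention $\Gamma(n)=\theta$ for $n\in\{0,N-1\}$ and $\Gamma(n)=1$ otherwise pays off, and it is what makes the final formula \eqref{GG} independent of $n$.
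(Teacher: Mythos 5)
Your proposal is correct and follows essentially the same route as the paper: combine Lemma~\ref{QtildeQ} with Lemma~\ref{Green} to get $G=\frac{1}{A}\bigl(1-\tilde Q_N/(\Gamma(n)Q_N)\bigr)$, identify $\tilde Q_N/(\Gamma(n)Q_N)=\theta^{-2}N(\lambda)$ via the determinant formulas and the rescaling of the off-diagonal entries, and simplify the prefactor. The only cosmetic difference is that you stay in the $(\theta^2,M,K)$ parametrization using \eqref{k entre m'}, whereas the paper passes through the intermediate mass-form identity \eqref{GG'} before converting to \eqref{GG}.
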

\begin{proof}

From lemmas \ref{QtildeQ}  and  \ref{Green}  we get
\begin{equation}\label{G}
\frac {\Gamma(n) Q_N-\tilde Q_N}{\Gamma(n) Q_N(\lambda (\theta^{-2} - 1) -
M)} =G(\lambda ,n,n)
\end{equation}

Similar to \eqref{feaff} we have
\begin{equation}\label{feafff2}
\tilde Q_N(\lambda )=\frac{1}{\tilde b_0 \dots
\tilde b_{N-2}}\det(\lambda -\tilde J)
\end {equation}
Using (\ref {J pert})
and writing the determinant as a product involving the eigenvalues of $\tilde J$ we get
\begin{equation}\label{ccu}
\tilde Q_N=\frac{\Gamma (n)}{\theta^2 b_0 \dots b_{N-2}}\prod_{j=1}^N (\lambda-\tilde{\lambda}_j)
\end {equation}
($\Gamma(n)$ defined in lemma \ref{QtildeQ}), and from(\ref{feaff})
\begin{equation}\label{cccu}
Q_N=\frac{1}{ b_0 \dots b_{N-2}}\prod_{j=1}^N (\lambda-{\lambda}_j)
\end{equation}

Using (\ref{ccu}) and (\ref{cccu}) in (\ref{G}) we get
 for  $0\leq n\leq N-1$
\begin{equation}\label{GG'}
\frac{m_n}{\lambda( \Delta m_n)-k}-\frac{\tilde m_n}{\lambda(\Delta m_n)-k}
\Big(\frac{\prod_{j=1}^N (\lambda-\tilde{\lambda}_j)}{\prod_{j=1}^N (\lambda-{\lambda}_j)}\Big)=G(\lambda,n,n),
\end{equation}
where we made use of (\ref{ga}). Multiplying both sides of the last equation
by $\frac{\De m_n}{\tilde m_n}=\theta^2-1$ and taking into account \eqref{DeltaM}
we get (\ref{GG}).

\end{proof}
\begin{remark} \label{re}
Multiplying both sides of (\ref {GG'}) by $\lambda(\Delta m_n)-k$ we obtain, if $\frac{k}{\Delta m_n}$ is not a pole of $G(\lambda ,n,n)$,
 \begin{equation}\label{GG1}
 m_n=\tilde m_n \frac{\prod_{j=1}^N (\lambda-\tilde{\lambda}_j)}{\prod_{j=1}^N
 (\lambda-{\lambda}_j)} \Longleftrightarrow   \lambda =\frac{k}{\Delta m_n}  \mbox{ or }
 G(\lambda ,n,n) =0
 \end{equation}
From  Lemma \ref{Green} and equations (\ref{fea}), (\ref{feaf}) we know that the roots of $G(\lambda ,n,n)$
are the eigenvalues of $J_{[0,n-1]}$ and $J_{[n+1,N-1]}$.It will be seen in Lemma \ref{LL},
that common eigenvalues of $J$ and $\tilde J$ are roots of   $G(\lambda ,n,n)$ too.
So if we know one of these points, or the value of $\frac{k}{\Delta m_n}$ if not a pole,
(for example not eigenvalue of $J$),  plus $m_n$ and the eigenvalues of $J$ and $\tilde J$,
then $\tilde m_n$ can be determined from (\ref{GG1}). A particular case of (\ref {GG1})
appears in \cite{Ram}, formula 2.17.
 \end{remark}
\section{Direct problem}

Let $\psi _1, \psi _2, ..., \psi _N$ be an orthonormal system of eigenvectors for $J$
with corresponding eigenvalues $\lambda _1, \lambda _2, ..., \lambda _N$. For example, one way
to get a system of orthonormal eigenvectors of $J$ is to consider
the polynomials $P_n$ defined by (\ref{P}) and normalize the eigenvectors
$(1,P_1(\lambda _k),P_2(\lambda _k), ..., P_{N-1}(\lambda _k))$  corresponding to  eigenvalue $\lambda _k$.
We obtain in this case
$|\psi_k(i)|^2=\frac{|P_i(\lambda _k)|^2}{\sum_{l=0}^{N-1}|P_l(\lambda _k)|^2}$.

\begin{lemma}\label{Green1}
$$
G(z,j,j):= \langle \delta _j, (J-z)^{-1} \delta _j\rangle =\sum_{k=1}^N \frac{|\psi _k(j)|^2}{\lambda _k-z}
$$
\end{lemma}
\begin{proof}
Let $\delta _j = \sum_{k=1}^N\alpha _k\psi _k$, then
$\alpha _k=\langle \delta _j,\psi _k\rangle=\psi _k(j)$
and
\begin {align*}
\langle \sum_{k=1}^N\alpha _k\psi _k,
(J-z)^{-1}\sum_{l=1}^N\alpha _l\psi _l\rangle&=\sum_{k,l=1}^N\overline{\alpha _k}\alpha _l\langle\psi _k,
\frac{1}{\lambda _l-z} \psi _l\rangle    &= \sum_{k=1}^N  \frac{|\psi _k(j)|^2}{\lambda _k-z}
\end{align*}
\end{proof}

The following Lemma can be proven using Lemma \ref{Green1}. The spectrum of the operator $T$ will
be denoted by $\sigma (T)$.
\begin  {lemma} \label{L}
Let $\beta \in \sigma(J)=\{\lambda _1,\lambda _2, ...,\lambda _n\} $\\

a) If $0<n<N-1$, then $G(\beta ,n,n)=\infty$ or $G(\beta ,n,n)=0$ \\

b) If $n=0$ or $n=N-1$, then $G(\beta ,n,n)=\infty$.

\end{lemma}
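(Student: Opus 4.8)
The plan is to use the spectral representation from Lemma \ref{Green1}, namely $G(z,n,n)=\sum_{k=1}^N |\psi_k(n)|^2/(\lambda_k-z)$, and to study the behavior as $z\to\beta$ for an eigenvalue $\beta=\lambda_{k_0}$. Each term is analytic at $\beta$ except the $k_0$-th one, which has a simple pole there with residue $-|\psi_{k_0}(n)|^2$. Hence the dichotomy is immediate: if $\psi_{k_0}(n)\neq 0$ the residue is nonzero and $G(\beta,n,n)=\infty$; if $\psi_{k_0}(n)=0$ the singular term drops out and $G(\cdot,n,n)$ extends analytically to $\beta$, giving a finite value. So for part (a) everything reduces to deciding whether an eigenvector can vanish at the interior index $n$, and for part (b) to showing it cannot vanish at the endpoints $0$ or $N-1$.

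For part (b), I would invoke the normalization of eigenvectors by the first-kind polynomials: the eigenvector for $\lambda_k$ is $(P_0(\lambda_k),\dots,P_{N-1}(\lambda_k))^T$ with $P_0\equiv 1$, so its $0$-th component is always $1\neq 0$, forcing $G(\beta,0,0)=\infty$. For the index $N-1$, the key fact is that $P_{N-1}(\beta)\neq 0$ whenever $\beta$ is an eigenvalue of $J$: indeed if both $P_{N-1}(\beta)=0$ and $Q_N(\beta)=0$ then the recurrence \eqref{Q} forces $b_{N-2}P_{N-2}(\beta)=0$, hence $P_{N-2}(\beta)=0$ since $b_{N-2}\neq 0$, and then the three-term recurrence \eqref{P} run backwards gives $P_j(\beta)=0$ for all $j$, contradicting $P_0\equiv 1$. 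So the last component of the eigenvector is also nonzero and $G(\beta,N-1,N-1)=\infty$. (Alternatively, one can note $G(z,N-1,N-1)=-\varphi_N(z)P_{N-1}(z)/Q_N(z)$ with $\varphi_N\equiv 1$ in the case $n=N-1$ from Lemma \ref{Green}, and the same argument shows the numerator does not vanish at a zero of $Q_N$.)

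For part (a), with $0<n<N-1$, the point is simply that an interior component of the eigenvector may or may not vanish: both possibilities occur, and in the first case the simple pole of $G$ survives while in the second it cancels. Concretely, using Lemma \ref{Green} we have $G(z,n,n)=-\varphi_N(z)P_n(z)/Q_N(z)$; since $\varphi_N$ and $P_n$ have degrees $N-n-1$ and $n$ respectively while $Q_N$ has degree $N$, and since at a zero $\beta$ of $Q_N$ the value $\varphi_N(\beta)P_n(\beta)$ is either zero or not, we get $G(\beta,n,n)=0$ in the former case and $G(\beta,n,n)=\infty$ in the latter. No further case analysis is needed since the statement only asserts that one of the two alternatives holds.

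The only mildly delicate point is the claim in part (b) that the eigenvector cannot vanish at the last index, i.e. that $P_{N-1}$ and $Q_N$ have no common zero; this is the backward-recurrence argument sketched above and is the one place where the structure of the Jacobi matrix (all $b_i\neq 0$) is genuinely used. Everything else is a direct reading of the spectral sum in Lemma \ref{Green1}.
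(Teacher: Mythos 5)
Your reduction via Lemma \ref{Green1} and your part (b) follow the paper's proof closely: the dichotomy on whether $\psi_{k_0}(n)$ vanishes, the observation $P_0\equiv 1$, and the backward-recurrence argument showing $P_{N-1}$ and $Q_N$ cannot have a common zero are all exactly the paper's steps. The problem is in part (a). The lemma asserts that in the non-pole case $G(\beta,n,n)$ equals $0$, not merely that it is finite, and your argument does not deliver this. You first say that when $\psi_{k_0}(n)=0$ the singular term ``drops out,'' giving a finite value; then, switching to the formula $G=-\varphi_N P_n/Q_N$, you claim that $\varphi_N(\beta)P_n(\beta)=0$ together with $Q_N(\beta)=0$ yields $G(\beta,n,n)=0$. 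That inference is false as stated: a common zero of numerator and denominator is a $0/0$ indeterminate form, and if the numerator vanished only to first order the limit would be a finite nonzero number (compare $z/z\to 1$). The degree count you invoke concerns behavior at infinity and says nothing about the orders of the zeros at $\beta$.

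The missing step, which is the one substantive point of part (a) and which the paper makes explicit, is that $\psi_{k_0}(n)=0$ forces $\beta$ to be an eigenvalue of \emph{both} corner submatrices $J_{[0,n-1]}$ and $J_{[n+1,N-1]}$ (truncate the eigenvector at the vanishing component; neither truncation is the zero vector, since otherwise the three-term recurrence would annihilate $\psi_{k_0}$ entirely). Hence $P_n(\beta)=0$ \emph{and} $\varphi_N(\beta)=0$ simultaneously, so the numerator $\varphi_N P_n$ has a zero of order at least two at $\beta$, while $Q_N$ has only a simple zero there because the spectrum of a Jacobi matrix is simple. Only then does the quotient tend to $0$. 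With that one sentence added, your proof is complete and coincides with the paper's.
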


\begin{proof}
 a) From Lemma \ref{Green1}, we know

$$G(z,n,n) =\sum_{k=1}^N \frac{|\psi _k(n)|^2}{\lambda _k-z}    $$

Assume $\beta =\lambda _{k_0} \in \sigma (J)$. There are two possibilities:
either $\psi_{k_0}(n)\not=0$ or $\psi _{k_0}(n)=0$. If the
first holds, then $\lambda _{k_0}$ is a pole of $G(z,n,n)$. In fact, taking
the left and right limits along the real axis we get $G(\lambda_{k_0}-,n,n)=+ \infty$
and $ G(\lambda_{k_0}+,n,n)=- \infty$ respectively. If the second possibility holds,
then the eigenvector $\psi _{k_0}$, which corresponds to
the eigenvalue of $\beta =\lambda _{k_0}$ of $J$, vanishes at $n$. This implies that
$\lambda_{k_0}$ is eigenvalue of $J_{[0,n-1]}$ and $J_{[n+1,N-1]}$ too.
Therefore $\varphi _N(\lambda _{k_0})=P_n(\lambda _{k_0})=0$ and using Theorem \ref{Green}
follows that $G(\beta = \lambda _{k_0},n,n)=0$.

b) Observe that $P_0 =1$ and similarly $P_{N-1} \not=0$, since otherwise
 $Q_N(\lambda _{k_0}) =P_{N-1}(\lambda _{k_0}) =0 $ and this
would imply $P_i = 0$ for all $i$.
 Therefore, using $|\psi_k(r)|^2=\frac{|P_r(\lambda _k)|^2}{\sum_{l=0}^{N-1}|P_l(\lambda _k)|^2}$,
$\psi _{k_0}(r)\not=0$ if $r= 1$ or $N-1$
 and then  $G(\beta ,n,n)=\infty$ follows.
 \end{proof}

Using Lemma \ref{Green1} and formula (\ref {GG}) we obtain the next formula, which will
be used in the following results:
\begin{equation} \label{Funda}
\frac{\theta^2}{1-\theta^2} - \frac{1}{1-\theta ^2}N(\lambda )
 =(\lambda -K)\sum_{l=1}^N \frac{|\psi _l(n)|^2}{\lambda _l-\lambda},
\end{equation}
where
 \begin{equation} \label{Fun}
  N(\lambda ):= \frac{\prod_{j=1}^N (\lambda-\tilde{\lambda}_j)}{\prod_{j=1}^N (\lambda-{\lambda}_j)}
\end{equation}
 and  $\psi _1, \psi _2, ..., \psi _N$ is an orthonormal system of eigenvectors
 of J with corresponding eigenvalues $\lambda _1, \lambda _2, ..., \lambda _N$.

 \medskip

 We call an eigenvalue $\lambda_r\in\sigma (J)$ unmovable if it does not change after the perturbation,
 i.e. if $\lambda_r\in \sigma (J)\cap \sigma (\tilde J)$.

\begin  {lemma} \label{LL} {\rm (Properties for the unmovable eigenvalues and $\theta$)}
\begin{equation}\label{coin}
\sigma (J)\cap \sigma (\tilde J)= \sigma (J)\cap \Big (\{\lambda :G(\lambda ,n,n)=0\}\cup
\{K\}\Big)
\end{equation}
\begin{equation}\label{coincide}
\# \Bigl\{ \sigma (J)\cap \sigma (\tilde J) \cap \{\lambda :G(\lambda ,n,n)=0\} \Bigr\} \leq \min(n,N-n-1).
\end{equation}
\begin{equation}\label{ora}
 \lambda_r \in \sigma (J)\cap\{\lambda :G(\lambda ,n,n)=0\}\Longrightarrow N(\lambda_r )
 =\theta^2\end{equation}
\begin{equation}\label{ora1}
\lambda_r =K\in\sigma (J)\Longrightarrow  N(\lambda_r )
=\theta^2+(1-\theta ^{2})|\psi_r(n) |^2\geq\theta^2
\end{equation}
\begin{equation}\label{ora2}
K\not\in\sigma (J)\Longrightarrow  N(K)=\theta^2
\end{equation}
\begin{equation}\label{o!}
 K\in  \s(J) \cup \s(\tilde J)\Longrightarrow K \in \s(J)\cap  \s(\tilde J).
\end{equation}
If $\l_N\ne K$, then $\tilde\l_N\ne\l_N$
and if $\l_1\ne K$, then $\tilde\l_1\ne\l_1$.
\end{lemma}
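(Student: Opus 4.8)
The plan is to show that the extreme eigenvalues $\l_N$ and $\l_1$ of $J$ are always poles of the Green's function $G(\cdot,n,n)$, so that by the characterization \eqref{coin} of the unmovable eigenvalues they cannot belong to $\s(\tilde J)$ unless they coincide with $K$. Concretely, from Lemma \ref{Green1} we have $G(\l,n,n)=\sum_{k=1}^{N}|\psi_k(n)|^2/(\l_k-\l)$, and since the eigenvalues of a Jacobi matrix with nonzero off-diagonal entries are simple, $\l_N$ is a (simple) pole of $G(\cdot,n,n)$ as soon as $\psi_N(n)\ne 0$; in that case $G(\l_N,n,n)=\infty\ne 0$. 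If moreover $\l_N\ne K$, then assuming $\l_N\in\s(\tilde J)$ would give $\l_N\in\s(J)\cap\s(\tilde J)\subseteq\{\l:G(\l,n,n)=0\}\cup\{K\}$ by \eqref{coin}, forcing $G(\l_N,n,n)=0$, a contradiction; hence $\tilde\l_N\ne\l_N$, and the same argument applied to the smallest eigenvalue yields $\tilde\l_1\ne\l_1$.

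Thus everything reduces to the claim that the eigenvector of $J$ belonging to the largest eigenvalue $\l_N$ (and, symmetrically, the one belonging to $\l_1$) does not vanish at the site $n$. For $n=0$ this is trivial, since $\psi_k(0)$ is proportional to $P_0(\l_k)=1$; and for $n\in\{0,N-1\}$ it is anyway contained in Lemma \ref{L}(b). For $1\le n\le N-1$ I would argue by contradiction: if $\psi_N(n)=0$ then, $\psi_k$ being the normalization of $(P_0(\l_k),\dots,P_{N-1}(\l_k))$, we get $P_n(\l_N)=0$, hence by \eqref{fea} $\l_N\in\s(J_{[0,n-1]})$. Taking a unit eigenvector $v\in\mathbb R^{n}$ of $J_{[0,n-1]}$ for $\l_N$ and extending it by zeros to $\hat v\in\mathbb R^{N}$, the vanishing of $\hat v$ at site $n$ kills the inter-block coupling term, so $\langle\hat v,J\hat v\rangle=\langle v,J_{[0,n-1]}v\rangle=\l_N=\l_N\|\hat v\|^2$. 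Since the maximum of the Rayleigh quotient of the symmetric matrix $J$ is attained only on the one-dimensional eigenspace of $\l_N$, the vector $\hat v$ must be an eigenvector of $J$ for $\l_N$; but then reading the eigenvalue equation $J\hat v=\l_N\hat v$ at the indices $n,n-1,\dots,1$, and using $\hat v(i)=0$ for $i\ge n$ together with $b_j\ne 0$, one gets $\hat v(n-1)=\dots=\hat v(0)=0$, so $\hat v\equiv 0$, contradicting $\|\hat v\|=1$. Replacing ``maximum'' by ``minimum'' throughout settles the case of $\l_1$. (Equivalently, this is just the strict Cauchy interlacing between $J$ and its corner $J_{[0,n-1]}$.)

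I expect this middle step — the non-vanishing of the extreme eigenvectors at the site $n$ — to be the only genuine obstacle, and it is exactly where extremality of the eigenvalue is used: at an interior eigenvalue $\l_r$ the value $\psi_r(n)$ may really vanish, which is precisely what makes such $\l_r$ eligible to be unmovable, as quantified by \eqref{coincide}. Everything else is bookkeeping with the identity \eqref{coin} proved above.
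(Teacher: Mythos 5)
There is a genuine gap: your argument establishes only the final sentence of the lemma. The six displayed assertions \eqref{coin}--\eqref{o!} are never proved --- indeed \eqref{coin} is \emph{used} as an ingredient in your first paragraph and then declared ``proved above'' in your last one, and the remaining items are dismissed as bookkeeping, which they are not. In the paper these assertions carry the bulk of the work: \eqref{coin} needs both inclusions (the direction ``$\l\in\s(J)$ and $G(\l,n,n)=0$ imply $\l\in\s(\tilde J)$'' comes from the identity \eqref{Funda}, which forces $N(\l)=\te^2$ to be finite and nonzero, so the factor $(\l-\l_j)$ in the denominator of $N$ must cancel against the numerator; the converse uses Lemma \ref{L} together with the fact that poles of $G$ other than $K$ are poles of $N$); \eqref{ora1} is a limit/residue computation at $\l=K$ in \eqref{Funda}; \eqref{o!} needs a separate argument ruling out $K\in\s(\tilde J)\setminus\s(J)$, since that would force $N(K)=0$ while \eqref{Funda} gives $N(K)=\te^2>0$; and \eqref{coincide} rests on the bound $\min(n,N-n-1)$ for the number of common eigenvalues of $J_{[0,n-1]}$ and $J_{[n+1,N-1]}$, cited from \cite{MR1616422}. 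None of this appears in your proposal, and it cannot be reconstructed from what you wrote.

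The part you do prove --- that $\l_1$ and $\l_N$ cannot be zeros of $G(\cdot,n,n)$, hence by \eqref{coin} cannot be unmovable unless equal to $K$ --- is correct and follows the same route as the paper. The only difference is that the paper cites strict Cauchy interlacing (Corollary 2.5 of \cite{MR1616422}) for the fact that an extreme eigenvalue of $J$ cannot be an eigenvalue of a proper corner block, whereas your Rayleigh-quotient argument combined with the three-term recurrence at the sites $n, n-1,\dots,1$ is a self-contained proof of that same fact; that piece is sound and can stand. But as a proof of Lemma \ref{LL} as a whole, the proposal is incomplete.
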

\begin{proof}
"$\supset$" for \eqref{coin}. \
 If $\lambda \in \sigma (J)$ but  $\lambda \notin \sigma (\tilde J)$ then
 $\lambda $ is a pole of $N$, by the definition of $N$.

If  $G(\lambda ,n,n) =0$, then from (\ref{GG}),(\ref {Funda}), we get
\begin {equation} \label{GGG}
\theta^2=N(\lambda )
\end{equation}
and $\lambda$ is not a pole of $N$. Therefore, if one eigenvalue $\lambda_{j_0}$ of $J$
coincides with one root of $G(\lambda,n,n)$, then some eigenvalue of $\tilde J$ has to
coincide with $\lambda _{j_0}$. Since the spectra of $J$ and $\tilde J$ are simple, no
more eigenvalues coincide at that point. In case  the  eigenvalue $\lambda _{j_0}$
of $J$ coincides with $K$, then from  \eqref{Funda} we get
\begin{equation}
 \frac{\theta^2}{1-\theta^2} - \frac{1}{1-\theta ^2}N(\lambda ) \longrightarrow
  -|\psi _{j _0}|^2 \;\mbox{when}\;  \lambda \longrightarrow K
\end{equation}
 and $\lambda_{j_0}=K$ is not a pole of $N$. Then some
 eigenvalue of $\tilde J$ has to coincide with $\lambda_{j_0}$.

 "$\subset$" for \eqref{coin}. \
 If $\lambda \in \sigma (J)\cap \sigma (\tilde J)$ then $\lambda$
 is not a pole of $N$, by the definition of $N$. It is enough to consider
the case $\lambda \not=K$. Using Lemma \ref{L}  we know that $\lambda $
is either a pole or a zero of $G(\cdot,n,n)$. From (\ref {Funda}), poles of $G$
that are not $K$, are poles of $N$. Therefore $\lambda$ is
a root of $G$ if $\lambda \in \sigma (J)\cap \sigma (\tilde J)$ and $\lambda \not=K$.

Further, as mentioned in Lemma \ref{L} a), if an
eigenvalue of $J$ is zero of $G(\lambda,n,n)$, then it is a common eigenvalue of $J_{[0,n-1]}$
and $J_{[n+1,N-1]}$. Since, according to \cite{MR1616422}, there are at most
$\min(n, N-1-n)$ of them, we conclude that the common eigenvalues of
$J$ and $\tilde J$ are at most $\min(n,N-1-n)$ plus possibly the point $K$, so
\eqref{coincide} is proven. Notice, that by b) of Lemma \ref{L},
$\sigma (J) \cap \{\lambda :G(\lambda ,n,n)=0\}=\emptyset$ when $n=0$ or $N-1$,
so the only possible common eigenvalue for
$J$ and $\tilde J$ in this case is  $K$.

Implications \eqref{ora}--\eqref{o!} are obtained using the proved part of the Lemma
and formula (\ref{Funda}).

Let us now pass to the last assertion. If the matrix $A_{[0,N-2]}$ is constructed from
a Jacobi matrix $A_{[0,N-1]}$ by
 deleting the last column and row ( similarly first column and row), then
 $\sigma (A_{[0,N-2]})\subset (\beta _1,\beta  _N)$
where $\beta  _1,\beta  _N$ are, respectively, the smallest and  largest eigenvalues
of $A_{[0,N-1]}$. See Corollary 2.5 in \cite{MR1616422}, for example.
Using this fact with matrices $J=J_{[0,N-1]}, J_{[0,N-2]},...$ and so on, we get
that the eigenvalues $\lambda _1$ and $\lambda _N$ of $J$, cannot be eigenvalues
of the submatrices $J_{[0,n-1]}$, $J_{[n+1,N-1]}$  defined in Theorem \ref{Green},
and therefore  cannot be roots of $G(\cdot,n,n)$. From Lemma \ref{LL} follows
$\lambda _r\not=\tilde\lambda _r$, $r=1,N$, unless  equal to $K$.
\end{proof}

\begin{theorem}\label{Condnes} {\rm (Interlacing properties for two spectra)}
Let $\sigma (J)=\{\lambda _1,\lambda _2, ...,\lambda_N \}$,
$\sigma (\tilde J)=\{\tilde \lambda _1,\tilde \lambda _2, ...,\tilde\lambda_N \}$.
Take   $p\in\{ 0, 1, ..., N\}$ such that  $\lambda_p<K\leq \lambda _{p+1}$,
where we define $\lambda _0=-\infty$ and $\lambda _{N+1}=\infty$, being $K$ the parameter
of the perturbation \eqref{J pert}, \eqref{DeltaM}.
Then there is exactly one eigenvalue of $\tilde J$ in each of the following
intervals:
$$[\lambda _{j}, \lambda _{j +1}),\quad j=1,...,p-1,$$
 $$(\lambda _{j},\lambda _{j+1}],\quad j=p+1,...,N-1,$$
 and
 $$[\lambda _p, K),\qquad
 (K,\lambda _{p+1}]\quad\mbox{if this last interval not empty.}$$

 \end{theorem}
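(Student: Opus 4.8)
The plan is to analyze the real rational function
\[
R(\lambda) := \sum_{l=1}^N \frac{|\psi_l(n)|^2}{\lambda_l-\lambda},
\]
and to combine the fundamental identity \eqref{Funda} with the sign/monotonicity behavior of $R$ between consecutive poles. First I would rewrite \eqref{Funda} as
\[
N(\lambda) = \theta^2 - (1-\theta^2)(\lambda-K) R(\lambda),
\]
so that the eigenvalues $\tilde\lambda_j$ of $\tilde J$ are exactly the zeros of $N(\lambda)$, i.e. the solutions of
\[
(1-\theta^2)(\lambda-K) R(\lambda) = \theta^2,
\]
at points that are not poles of $N$ (poles of $N$ being precisely the $\lambda_j$ with $|\psi_j(n)|^2\ne 0$). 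Since $\Delta m_n\ge 0$ we have $\theta^2 = m_n/\tilde m_n \le 1$, hence $1-\theta^2\ge 0$ and also $\theta^2>0$. On each open interval $(\lambda_j,\lambda_{j+1})$ not containing $K$, the sign of $\lambda-K$ is constant, while $R$ is continuous, strictly increasing (since $R'(\lambda)=\sum |\psi_l(n)|^2/(\lambda_l-\lambda)^2>0$ wherever $\psi_l(n)\ne0$ for some $l$), and runs from $-\infty$ (or $+\infty$ at the unbounded ends, with the appropriate sign) to $+\infty$ across any pole. Counting sign changes of $(1-\theta^2)(\lambda-K)R(\lambda)-\theta^2$ then yields exactly one root in each interval of the stated form, and the half-open/closed nature of each interval is dictated by whether the endpoint $\lambda_j$ is a genuine pole of $N$ (contributing $\pm\infty$) or a removable singularity (an unmovable eigenvalue where $\psi_j(n)=0$, handled via Lemma \ref{LL}).

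The key steps, in order, are: (1) establish $0<\theta^2\le1$ from $\Delta m_n\ge0$, so the constant $(1-\theta^2)$ is nonnegative; (2) identify zeros of $N$ with eigenvalues of $\tilde J$ and poles of $N$ with those $\lambda_j$ having $\psi_j(n)\ne0$; (3) on a generic interval $(\lambda_j,\lambda_{j+1})$ with $\psi_j(n)\ne0\ne\psi_{j+1}(n)$ and $K\notin[\lambda_j,\lambda_{j+1}]$, show $f(\lambda):=(1-\theta^2)(\lambda-K)R(\lambda)$ runs monotonically (after accounting for the fixed sign of $\lambda-K$) from $-\infty$ to $+\infty$ or vice versa, and in either case the equation $f=\theta^2$ has exactly one solution, which is an interior point, hence lies in $[\lambda_j,\lambda_{j+1})$ or $(\lambda_j,\lambda_{j+1}]$ as claimed depending on which endpoint carries the "$-\infty$ side"; (4) handle the endpoints that are unmovable eigenvalues: if $\psi_j(n)=0$ then by Lemma \ref{LL} (specifically \eqref{ora}) $\lambda_j$ is itself a zero of $N$, i.e. $\lambda_j\in\sigma(\tilde J)$, and $N$ extends analytically across $\lambda_j$, so we include $\lambda_j$ in the appropriate half-open interval and the monotonicity argument on the merged interval still gives exactly one $\tilde\lambda$; (5) treat the two intervals adjacent to $K$: near $K$ from the appropriate side, the finite limit of the right side of \eqref{Funda} is $-|\psi_p(n)|^2$ if $K\in\sigma(J)$ (by the computation in the proof of Lemma \ref{LL}) or the value $\theta^2$ if $K\notin\sigma(J)$ (by \eqref{ora2}), and combined with the $\pm\infty$ behavior at $\lambda_p$ and $\lambda_{p+1}$ this produces exactly one $\tilde\lambda$ in $[\lambda_p,K)$ and one in $(K,\lambda_{p+1}]$ when the latter is nonempty; (6) finally use the last assertion of Lemma \ref{LL} ($\tilde\lambda_1\ne\lambda_1$, $\tilde\lambda_N\ne\lambda_N$ unless equal to $K$) together with a total count ($N$ intervals, one root each, $N$ eigenvalues of $\tilde J$) to confirm no eigenvalue of $\tilde J$ escapes to $\pm\infty$ and the assignment is exhaustive.

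The main obstacle I expect is the careful bookkeeping at the special endpoints, namely the unmovable eigenvalues (where a pole of $N$ degenerates to a removable point and two "naive" intervals must be merged, yet the statement still lists them as separate half-open intervals each containing one $\tilde\lambda$ — this only works out because the removable point $\lambda_j$ is simultaneously a $\tilde\lambda$, so it is the right endpoint of one interval and simultaneously counts; one must check consistency of the closed/open bracket conventions here) and the point $K$ itself (distinguishing $K\in\sigma(J)$ from $K\notin\sigma(J)$, and the subcase $K=\lambda_{p+1}$ which makes $(K,\lambda_{p+1}]$ empty). A clean way to organize this is to first prove the theorem under the genericity assumption that all $\psi_l(n)\ne0$ and $K\notin\sigma(J)$, where the sign-change count is transparent, and then argue that relaxing these assumptions only converts certain strict inequalities in the interlacing into the equalities recorded by the half-open intervals, invoking Lemma \ref{LL} at each degenerate point. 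The monotonicity input in step (3) is the engine: since $R$ is a sum of the strictly-increasing terms $|\psi_l(n)|^2/(\lambda_l-\lambda)$, on any interval free of poles of $R$ the product $(\lambda-K)R(\lambda)$ need not itself be monotone, but its behavior at the two ends (one $+\infty$, one $-\infty$, because crossing a pole of $R$ flips the sign while $\lambda-K$ keeps its sign on the interval) forces an odd number of solutions of $f=\theta^2>0$; simplicity of $\sigma(\tilde J)$ (which follows since $\tilde J$ is Jacobi) then forces that number to be exactly one.
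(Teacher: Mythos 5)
Your proposal is correct and follows essentially the same route as the paper: both rewrite \eqref{Funda} so that the non-common eigenvalues of $\tilde J$ appear as intermediate-value solutions on each interval between consecutive poles of $G(\cdot,n,n)$, handle the unmovable eigenvalues and the point $K$ via Lemma \ref{LL} (equations \eqref{ora}, \eqref{ora1}, \eqref{ora2}, \eqref{o!}), and upgrade ``at least one per interval'' to ``exactly one'' by counting the $N$ eigenvalues of $\tilde J$ against the $N$ disjoint intervals. Two small cautions: your closing appeal to simplicity of $\sigma(\tilde J)$ does not by itself exclude three distinct roots in one interval --- the correct mechanism is the total count you already give in step (6), which is also what the paper uses --- and the unmovable $\lambda_j$ are not zeros of $N$ (there $N(\lambda_j)=\theta^2\neq 0$) but removable singularities at which $\lambda_j\in\sigma(\tilde J)$, exactly as your step (4) in effect treats them.
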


\begin{proof}
Let us find if there is  an eigenvalue of $\tilde J$, that is a root of $N(\lambda)$, in the interval
$[\lambda_p, K)$. Using (\ref {Funda}) and (\ref{ora1}) we get
$N(K) \geq  \frac {m_n}{\tilde m_n}>0$. From Lemma \ref {L} we have
two possibilities at  $\lambda_p$ . Either $G(\lambda_p,n,n) =0$ or $\infty$.
If the first happens, then $\lambda_p$ coincides with an eigenvalue of $\tilde J$
by Lemma \ref {LL} (and $N(\lambda_p) =\theta$ by (\ref{ora})).
   If the second possibility holds, then from (\ref {Funda})
  $N(\lambda_p+)=-\infty$ .
Since $N$ is continuous in  $(\lambda_p, K)$ there is at least
a zero of $N$ in this interval.  Therefore there is at least one eigenvalue of
$\tilde J$ in the interval $[\lambda_p, K)$.

Now consider the interval $[\lambda_{p-1}, \lambda_p)$.  The two options mentioned above
for $\lambda_p$ give us either $N(\lambda_p)=\theta$ or $N(\lambda_p-) = \infty$.
The two options for $\lambda_{p-1}$ are $N(\lambda_{p-1})=\theta$ and in this case
$\lambda_{p-1}$ is eigenvalue of $\tilde J$, or $N(\lambda_{p-1}+)=-\infty$. If the second possibility
happens, since $N(\lambda )>0$ for
$\lambda$  near $\lambda_p$,  again from the continuity of  $N(\lambda)$  in $(\lambda_{p-1}, \lambda _p)$
follows that $N$ has at least a zero in this interval. Therefore, there is at least one eigenvalue o
f $\tilde J$ in $[\lambda_{p-1}, \lambda_p)$.
 Continuing in this way, we get one eigenvalue of $\tilde J$ in each interval
 $[\lambda _{j}, \lambda _{j +1})$
 with $j=1,...,p-1$. Therefore if
$K > \lambda _p$ we get at least one eigenvalue of
$\tilde J$ in each $[\lambda _{j}, \lambda _{j +1})$ with $j=1,...,p-1$
and one  in $[\lambda_p, K)$, a total of at least $p$ eigenvalues of
$\tilde J$ in the interval $[\lambda_1, K)$.

Now consider the case $K <  \lambda_{p+1}$ and let us see whether
there is an eigenvalue of $\tilde J$ in $(K, \lambda_{p+1}]$ .
From (\ref {Funda}) and (\ref{ora1}) , $N(K) \geq \theta$.
Now, from Lemma \ref {L} we have two possibilities at  $\lambda_{p+1}$.
Either $G(\lambda_{p+1},n,n) =0$ or $\infty$. If the first happens,
then $\lambda_{p+1}$ coincides with an eigenvalue of $\tilde J$
by (\ref{coin}) (and $N(\lambda_{p+1}) =\theta$ by (\ref{ora})).
 If the second possibility holds, then from (\ref {Funda}),
$N(\lambda_{p+1}-)=-\infty$ .
Since $N$ is continuous in  $( K,\lambda _{p+1})$ there is at least
a zero of $N$ in this interval.  Therefore there is at least one eigenvalue of $\tilde J$
in the interval $( K,\lambda_{p+1}]$. Now consider the interval
$(\lambda_{p+1},\lambda_{p+2}]$.  The two options mentioned above for $\lambda_{p+1}$
give us either $N(\lambda_{p+1})=\theta$ or $N(\lambda_{p+1}+) = \infty$.
The two options for $\lambda_{p+2}$  are $N(\lambda_{p+2})=\theta$
and in this case $\lambda_{p+2}$ is eigenvalue of $\tilde J$, or $N(\lambda_{p+2}-)=-\infty$.
If the second possibility happens, since $N(\lambda )>0$ for
$\lambda$  near $\lambda_{p+1}$,  again from the continuity of  $N(\lambda )$
in $(\lambda_{p+1}, \lambda_{p+2})$ follows that $N$ has at least a zero in this interval.
Therefore, there is at least one eigenvalue of $\tilde J$ in $(\lambda_{p+1}, \lambda_{p+2}]$.
Continuing in this way, we get one eigenvalue of $\tilde J$  in each interval $ (\lambda _{j},\lambda _{j+1}]$
with $j=p+1,\ldots,N-1$.

Therefore if $K < \lambda_{p+1}$ we get at least one eigenvalue of $\tilde J$
in each $(\lambda _{j}, \lambda _{j+1}]$ with $j=p+1,\ldots,N-1$
and one  in $ (K,\lambda_{p+1}]$, a total of at least  $N-p$ eigenvalues of $\tilde J$
in the interval $ (K,\lambda _N]$. Since the $p$ eigenvalues of
$\tilde J$ in $[\lambda _1, K)$ plus the $N-p$ eigenvalues of $\tilde J$ in
$ (K,\lambda _N]$ give all the eigenvalues of $\tilde J$, we conclude that at most
there is one eigenvalue of the perturbed operator in each one of the intervals considered.

In case $K = \lambda_{p+1}$ we analyze first the interval
$(K,\lambda_{p+2}]$ exactly as above, and find at least one eigenvalue of
 $\tilde J$ in it. Continuing with the other intervals as before, we conclude that there is at least
 one eigenvalue of  $\tilde J$ in $ (\lambda_{j},\lambda_{j+1}]$ with $j=p+2,...,N-1$.
Therefore we get $N-p-1$ eigenvalues of $\tilde J$ in $ (K,\lambda_N]$.
These plus $\lambda_{p+1}$ and the $p$ eigenvalues in $[\lambda_1, K)$
give all the $N$ eigenvalues of the perturbed operator. Therefore there is {\it at most} one
eigenvalue of $\tilde J$ in each of the intervals considered.
\end{proof}

 \begin  {lemma} \label{Alternativa}
If $K\in\s(J) \cap \s(\tilde J)$, then the following alternative holds:

{\sl either}

(a) $N(K) = \theta^2$, and then $N'(K)=0$, $G(K,n,n)=0$ and there are
at most $\min(n-1,N-n-2)$ other common points of $\s(J)$
and $\s(\tilde J)$.

{\sl or}

(b) $N(K) > \theta^2$, and then $G(K,n,n)=\infty$ and there may be
$\min(n,N-n-1)$ other common points of $\s(J)$
and $\s(\tilde J)$.

 \end{lemma}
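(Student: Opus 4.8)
The plan is to read the dichotomy directly off \eqref{ora1}. Since $K\in\sigma(J)$, write $K=\lambda_r$; because $K=\gamma/\Delta m_n$ is finite we have $\Delta m_n>0$, hence $\theta^2=m_n/\tilde m_n<1$, and \eqref{ora1} gives $N(K)=\theta^2+(1-\theta^2)|\psi_r(n)|^2\geq\theta^2$, with equality exactly when $\psi_r(n)=0$. So the two alternatives are precisely (a) $\psi_r(n)=0$, i.e. $N(K)=\theta^2$, and (b) $\psi_r(n)\neq0$, i.e. $N(K)>\theta^2$. It remains to extract, in each case, the claims about $G(K,n,n)$, about $N'(K)$, and about the number of common eigenvalues. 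Two facts will be used in both cases: rearranging \eqref{GG} gives the identity $N(\lambda)=\theta^2-(1-\theta^2)(\lambda-K)\,G(\lambda,n,n)$; and, by \eqref{coin} together with \eqref{coincide}, the set $\sigma(J)\cap\{\lambda:G(\lambda,n,n)=0\}$ is contained in $\sigma(J)\cap\sigma(\tilde J)$ and has at most $\min(n,N-n-1)$ elements.

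In case (a) I would argue as follows. The eigenvector $\psi_r$ belonging to $\lambda_r=K$ vanishes at site $n$, so, as noted in the proof of Lemma \ref{L}(a), $K$ is simultaneously an eigenvalue of $J_{[0,n-1]}$ and of $J_{[n+1,N-1]}$; this forces $0<n<N-1$, since $\psi_r(0)=P_0(K)\neq0$ and $\psi_r(N-1)=P_{N-1}(K)\neq0$. Hence $P_n(K)=\varphi_N(K)=0$, so $\varphi_N P_n$ vanishes to order at least two at $K$ while $Q_N$ has only a simple zero there ($K$ being a simple eigenvalue of $J$); therefore the rational function $-\varphi_N P_n/Q_N$ of Lemma \ref{Green} vanishes at $K$, i.e. $G(K,n,n)=0$. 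Differentiating the identity $N(\lambda)=\theta^2-(1-\theta^2)(\lambda-K)G(\lambda,n,n)$ and putting $\lambda=K$ then gives $N'(K)=-(1-\theta^2)G(K,n,n)=0$. Finally $K$ now lies in $\sigma(J)\cap\{\lambda:G(\lambda,n,n)=0\}$, so by \eqref{coin} the common spectrum $\sigma(J)\cap\sigma(\tilde J)$ equals that set, which has at most $\min(n,N-n-1)$ points; since $K$ is one of them, at most $\min(n,N-n-1)-1=\min(n-1,N-n-2)$ common points remain besides $K$.

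In case (b), $\psi_r(n)\neq0$, so in the expansion $G(z,n,n)=\sum_l|\psi_l(n)|^2/(\lambda_l-z)$ from Lemma \ref{Green1} the term with $l=r$ genuinely blows up at $z=K=\lambda_r$; thus $G(K,n,n)=\infty$, and in particular $K\notin\{\lambda:G(\lambda,n,n)=0\}$. By \eqref{coin} the common spectrum is then the disjoint union of $\{K\}$ with $\sigma(J)\cap\{\lambda:G(\lambda,n,n)=0\}$, and \eqref{coincide} bounds the latter by $\min(n,N-n-1)$; so apart from $K$ there can be up to $\min(n,N-n-1)$ common points.

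The step needing the most care is the equality $G(K,n,n)=0$ in case (a): the vanishing of the residue of $G$ at $K$ (immediate from $\psi_r(n)=0$) only gives that $G$ is finite there, not that its value is $0$; getting the value is exactly why one must route the argument through $P_n(K)=\varphi_N(K)=0$ and the simple zero of $Q_N$, so that $-\varphi_N P_n/Q_N$ has a genuine zero at $K$. Everything else is bookkeeping with \eqref{ora1}, \eqref{coin}, \eqref{coincide} and the identity relating $N$ and $G$.
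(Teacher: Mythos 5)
Your proof is correct and uses essentially the same ingredients as the paper's: the dichotomy of Lemma \ref{L} at $K$ (equivalently, whether $\psi_r(n)$ vanishes), the identity \eqref{GG} relating $N$ and $G$, and the counting via \eqref{coin} and \eqref{coincide}. The only cosmetic difference is that you enter the alternative through \eqref{ora1} and derive the behavior of $G(K,n,n)$ from it, whereas the paper starts from $G(K,n,n)\in\{0,\infty\}$ and deduces the value of $N(K)$ (using the negative residue of $G$ at $K$ in case (b)); your care in establishing $G(K,n,n)=0$ via $P_n(K)=\varphi_N(K)=0$ rather than mere finiteness is exactly the right point.
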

 \begin{proof}
According to Lemma \ref{L}, $G(K,n,n)=0$ or $G(K,n,n)=\infty$.
In the first case, by formula \eqref{GG}, that means that the function
$$ \frac{1}{\l-K} \cdot \frac{\theta^2 - N(\lambda)}{1-\theta^2}
$$
has a zero at $K$, thus
$$ N(K) = \frac {\displaystyle\prod_{\tilde\l_j\ne K}
(K-\tilde\l_j) } {\displaystyle\prod_{\l_j\ne K} (K-\l_j) }
= \theta^2
$$
and, moreover,
$$ N'(K)=0.
$$
Since $G(\l,n,n)$ may vanish only at $\min(n,N-n-1)$ points of the spectrum of $J$, in the first case there
may be at most $\min(n-1,N-n-2)$ other points of $\s(J)\cap\s(\tilde J)$.

In the second case, the function $\theta^2 - N(\lambda)$ may not
have a zero at $K$ of order greater than $1$, because otherwise $G(\l,n,n)$ would not have
a pole at this point. We also know that $G(\l,n,n)$ has a negative residue at $K$.
Thus, $N(K)>\theta^2$. In the second case there may be $\min(n,N-n-1)$
more points of $\s(J)\cap\s(\tilde J)$ where $G(\l,n,n)$ vanish.
\end{proof}

\section{Inverse problem}

It turns out that the properties of the spectral data, described in the previous section, are sufficient.

Let be given:

i)
 $\s=\{\l_1,\ldots,\l_N\}$ and $\hat\s=\{\hat\l_1,\ldots,\hat\l_N\}$ two finite  subsets of $\mathbb{R}$
$\l_i<\l_{i+1},\quad \hat\l_i<\hat\l_{i+1}$ for $i=1,\ldots,N-1$.

ii) $K\in\mathbb{R}$

iii) An integer number $n\in\{0,1,\ldots,N-1\}$\\

 Introduce the following notations:
\begin{equation}\label{mus}
\{\mu _1,\ldots,\mu _q\}:=\sigma \cap \hat\sigma -\{K\}.\ \mbox{If}\ \sigma \cap \hat\sigma -\{K\}=\emptyset, \ \mbox{then} \ q:=0\,;
\end{equation}
\begin{equation} \label{muss}
  \hat N(\lambda ):= \frac{\prod_{j=1}^N (\lambda-\hat{\lambda}_j)}{\prod_{j=1}^N (\lambda-{\lambda}_j)}\,;
\end{equation}

\begin{equation} \label{musss}
\theta ^2:=  \hat N(\mu _1). \ \mbox{If}  \ \sigma \cap \hat\sigma -\{K\}=\emptyset,
\ \mbox{then fix any } p \in (0,\hat N(K )]
\ \mbox{and set} \ \theta ^2 :=p.
\end{equation}

\begin{equation} \label{ñ}
   \tilde n:=\min(n,N-n-1)
\end{equation}

\begin{theorem}\label{CondNecSuf} {\rm (Necessary and sufficient conditions)}\\

The conditions \\

I)  $\sigma ,\hat\sigma$ and $K$ interlace as in Theorem \ref{Condnes}.\\

II)   $\theta ^2=  \hat N(\mu _1)=\hat N(\mu _2)=... =\hat N(\mu _q)\in(0,1)$\\

III) If $K\notin\sigma \cup \hat \sigma$ then $q\leq \tilde n$ and  $\hat N(K)=\theta ^2$.\\

IV) If $K\in\sigma \cup \hat \sigma$ then $K\in\sigma \cap \hat \sigma$ and either\\
   a) $q\leq \tilde n$ and $\hat N(K)> \theta ^2$
    or \\
   b) $q<\tilde n$  and  $\hat N(K)=\theta ^2$, $\hat N'(K)=0$  ( $'$ denotes derivative) \\

\noindent are necessary and sufficient for the existence of $N\times N$ Jacobi matrices $J$ and $\tilde J$,
where $\tilde J$ is obtained by perturbing $J$ at the $n$ place as described in  (\ref{J pert}),
that is $\tilde a_n
=\theta^2(a_n+M),
\quad \tilde b_{n-1}=\theta b_{n-1},
\quad \tilde b_n=\theta b_n$ with $M = (\theta^{-2}-1) K$, such that $$\sigma =\mbox{spectrum of}\quad J,\quad \hat\sigma =\mbox{spectrum of}\quad\tilde J.$$

\end{theorem}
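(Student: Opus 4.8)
\emph{Necessity and set-up.}
Necessity of I--IV is immediate from the direct theory: Theorem \ref{Condnes} gives I; Lemma \ref{LL}, read with $\hat N$ in the role of $N$ (so that $\theta^2=\hat N(\mu_1)$ by \eqref{ora}), gives II and the implication $K\in\s\cup\hat\s\Rightarrow K\in\s\cap\hat\s$, and gives III via \eqref{ora2}, \eqref{coincide}; Lemma \ref{Alternativa} then furnishes the dichotomy IV(a)/IV(b), with $q\le\tilde n$ in case (a) and $q<\tilde n$, $\hat N'(K)=0$ in case (b); finally $\theta^2=m_n/\tilde m_n\in(0,1)$ because $\De m_n>0$. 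For sufficiency the plan is to build a Jacobi matrix $J$ with $\s(J)=\s$ whose Green function at $n$ is exactly
\[
g(\lambda):=\frac{\theta^2-\hat N(\lambda)}{(1-\theta^2)(\lambda-K)},\qquad\theta^2,\ \hat N\ \text{as in \eqref{muss}, \eqref{musss},}
\]
and then to set $\tilde J:=\tilde J(n)$ as in \eqref{J pert} with $M=(\theta^{-2}-1)K$. Indeed, \eqref{GG} then reads $(1-\theta^2)(\lambda-K)\,G(\lambda,n,n)=\theta^2-\prod_j(\lambda-\tilde\lambda_j)/\prod_j(\lambda-\lambda_j)$ for the eigenvalues $\tilde\lambda_j$ of $\tilde J$; comparing with $g$ forces $\prod_j(\lambda-\tilde\lambda_j)=\prod_j(\lambda-\hat\lambda_j)$, i.e.\ $\s(\tilde J)=\hat\s$, so nothing further is needed for $\tilde J$.

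\emph{Step 1: $g$ is a good Herglotz function.}
A sign count over the intervals cut out by $\s$ and $K$ shows that I together with $\theta^2\in(0,1)$ (part of II) force the partial fraction expansion $g(\lambda)=\sum_{l=1}^N w_l/(\lambda_l-\lambda)$ to have $w_l\ge0$ for all $l$ and $w_l=0$ \emph{precisely} when $\lambda_l\in\hat\s$ (it is here, and only here, that the equalities $\hat N(\mu_i)=\theta^2$ in II enter), while $\sum_l w_l=1$ because $\hat N(\lambda)\to1$ makes $g(\lambda)\sim-\lambda^{-1}$ at infinity. Hypotheses III, IV dispose of the point $K$: in cases III and IV(a) the zero-weight set $Y:=\{\lambda_l:w_l=0\}$ equals $\{\mu_1,\dots,\mu_q\}$, while in case IV(b) (where $\hat N(K)=\theta^2$, $\hat N'(K)=0$) the point $K$ is a zero of $g$ lying in $\s$, so $Y=\{\mu_1,\dots,\mu_q,K\}$; in all cases $|Y|\le\tilde n=\min(n,N-n-1)$ and $g$ vanishes at every point of $Y$.

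\emph{Steps 2--3: gluing two half-matrices.}
Put $P_\s(\lambda)=\prod_j(\lambda-\lambda_j)$ and $R(\lambda)=-g(\lambda)P_\s(\lambda)$, a monic polynomial of degree $N-1$ having the points of $Y$ as double zeros and $N-1-2|Y|$ further simple real zeros. Split these simple zeros into two groups of sizes $n-|Y|$ and $N-n-1-|Y|$ (possible since $|Y|\le\tilde n$), adjoin $Y$ to each to get sets $\Lambda_-$, $\Lambda_+$ of sizes $n$, $N-n-1$, and let $R_1=\prod_{\nu\in\Lambda_-}(\lambda-\nu)$, $R_2=\prod_{\xi\in\Lambda_+}(\lambda-\xi)$, so $R_1R_2=R$. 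Write $1/g(\lambda)=(a_n-\lambda)+\Pi(\lambda)$ with $a_n:=\lim_{\lambda\to\infty}(1/g(\lambda)+\lambda)$ and $\Pi(\lambda)=\sum_{R(\rho)=0}c_\rho/(\lambda-\rho)$, where $c_\rho=1/g'(\rho)>0$ since $g'=\sum_l w_l(\lambda_l-\lambda)^{-2}>0$ wherever finite. Choosing $t_\mu\in(0,1)$ for $\mu\in Y$, setting $b_{n-1}^2:=\sum_{\rho\in\Lambda_-\setminus Y}c_\rho+\sum_{\mu\in Y}t_\mu c_\mu>0$ and $b_n^2$ symmetrically with $1-t_\mu$, let $J_1$ (size $n$) and $J_2$ (size $N-n-1$) be the Jacobi matrices whose spectral measures relative to the last, resp.\ first, coordinate vector are $\sum_{\nu\in\Lambda_-}(w^-_\nu/b_{n-1}^2)\,\delta_\nu$ with $w^-_\nu=c_\nu$ off $Y$ and $w^-_\mu=t_\mu c_\mu$ on $Y$, resp.\ its analogue for $J_2$; these are probability measures with the prescribed supports, so $J_1,J_2$ exist and are unique, $\det(\lambda-J_i)=R_i$, and $-b_{n-1}^2 m_-(\lambda)-b_n^2 m_+(\lambda)=\Pi(\lambda)$ for the corner resolvent entries $m_\pm$. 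Assemble $J$ from the blocks $J_1,J_2$, the diagonal entry $a_n$ at place $n$, and couplings $b_{n-1},b_n>0$ joining place $n$ to the last index of $J_1$ and the first index of $J_2$ (one block is absent and $\delta_n$ is cyclic when $n\in\{0,N-1\}$, and the construction degenerates to the classical reconstruction of $J$ from $g$). The Schur complement formula gives $G(\lambda,n,n)=\big[(a_n-\lambda)-b_{n-1}^2m_--b_n^2m_+\big]^{-1}=\big((a_n-\lambda)+\Pi(\lambda)\big)^{-1}=g(\lambda)$, and then Lemma \ref{Green} in the form $G(\lambda,n,n)=-\det(\lambda-J_1)\det(\lambda-J_2)/\det(\lambda-J)$ yields $\det(\lambda-J)=-R_1R_2/g=P_\s$, i.e.\ $\s(J)=\s$. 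Defining $\tilde J$ as above finishes the proof; since all choices are explicit, this is also the reconstruction algorithm.

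\emph{Main obstacle.}
The crux is Step 1 --- converting the combinatorial hypotheses I--IV into the analytic statement that $g$ is a probability-normalised discrete Herglotz function whose zero-weight atoms are exactly $\s\cap\hat\s\setminus\{K\}$ (plus $K$ itself in case IV(b)) and number at most $\tilde n$. This is a somewhat delicate case analysis of the sign of each residue of $g$ across the regions delimited by $\s$ and $K$, in which III and IV(a)/IV(b) are precisely what decide whether $K$ is a genuine pole of $g$, a removable point, or a zero of $g$. Once Step 1 is in hand, Steps 2--3 are the routine ``glue two Jacobi matrices along an interior site'' construction; the only remaining verifications --- strict positivity of $b_{n-1}^2,b_n^2$ and of all prescribed weights --- are automatic from $c_\rho>0$ and $|Y|\le\tilde n$, and the freedom in the partition of the simple zeros of $R$ and in the parameters $t_\mu$ is exactly the freedom parametrised in Theorem \ref{manifolds}.
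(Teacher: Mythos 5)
Your proposal is correct and follows essentially the same route as the paper: necessity is drawn from Theorem \ref{Condnes}, Lemma \ref{LL} and Lemma \ref{Alternativa}, and sufficiency proceeds by showing that $\hat G(\l)=\frac{\theta^2-\hat N(\l)}{(1-\theta^2)(\l-K)}$ is a probability-normalised discrete Herglotz function with exactly the zero weights dictated by II--IV, then realising it as an $nn$ Green's function via the decomposition $-\hat G^{-1}=\l-a_n+b_n^2m_++b_{n-1}^2m_-$ with the zeros of $\hat G$ distributed between $m_\pm$ and the residues at the $q$ (or $q+1$) common zeros split, and finally checking $\s(\tilde J)=\hat\s$ by comparing with \eqref{GG}. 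The only difference is cosmetic: you carry out the gluing of the two half-matrices explicitly where the paper cites Theorems 6.1--6.4 and formula (2.18) of \cite{MR1616422}.
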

\begin{remark}
Observe that condition I) implies $\hat N (K)\in (0,1)$
\end{remark}
 \begin{theorem}\label{manifolds}

 Assume conditions of previous theorem hold. If $K\notin\sigma \cup \hat \sigma$ or $K\in\sigma \cup \hat \sigma$
 and  option IV a) happens, then  there are infinitely many pairs $J, \tilde J$ of $N\times N$ Jacobi Matrices,
 if $q\not=0$. Indeed this inverse spectral family is a collection of
 $${N-2q-1 \choose n-q}$$ disjoint manifolds of dimension $q$ and diffeomorphic to a $q$ dimensional open ball.
 If $q=0$, then there is only the finite
 $${N-1 \choose n}$$ number of pairs $J, \tilde J$.\\
 If $K\in\sigma \cup \hat \sigma$ and option  IV)b happens, there are infinitely many pairs. The inverse
 spectral family is a collection of  $${N-2q-3 \choose n-q-1}$$ disjoint manifolds of dimension $q+1$
 diffeomorphic to a $q+1$ dimensional open ball.

\end{theorem}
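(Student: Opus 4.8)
The plan is to prove Theorem \ref{manifolds} constructively, by setting up a parametrization of all pairs $(J,\tilde J)$ with the given spectral data and counting the connected components and dimensions. The starting point is formula \eqref{Funda}, which in the inverse-problem notation reads
$$
\frac{\theta^2}{1-\theta^2}-\frac{1}{1-\theta^2}\hat N(\lambda)=(\lambda-K)\sum_{l=1}^N\frac{w_l}{\lambda_l-\lambda},
$$
where $w_l=|\psi_l(n)|^2\ge 0$ and $\sum_l w_l=1$. Thus the only freedom, given $\sigma$, $\hat\sigma$, $K$, $n$, lies in the choice of the weights $w_1,\ldots,w_N$: from these one reconstructs the $nn$-Green function $G(\lambda,n,n)=\sum_l w_l/(\lambda_l-\lambda)$, hence (together with $\theta^2$, which is already determined by the data via \eqref{musss}) the whole of $J$ and $\tilde J$ by a standard Jacobi-matrix reconstruction (the algorithm referred to in the introduction; the submatrices $J_{[0,n-1]}$ and $J_{[n+1,N-1]}$ are read off from the zeros of $G$, and $J$ itself from its poles $\lambda_l$ with residues $w_l$). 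So the whole theorem reduces to: describe the set $W$ of admissible weight vectors $(w_1,\ldots,w_N)$.

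First I would extract from \eqref{Funda} the constraints on the $w_l$. Comparing residues at $\lambda=\lambda_l$ on both sides gives $w_l=-\dfrac{1}{1-\theta^2}\cdot\dfrac{\hat N(\lambda_l)\cdot(\text{sign/normalization factor})}{?}$ — more precisely, $w_l$ is a fixed positive multiple of $\dfrac{\prod_j(\lambda_l-\hat\lambda_j)}{\prod_{j\ne l}(\lambda_l-\lambda_j)}\cdot\dfrac{1}{\lambda_l-K}$, so $w_l$ is \emph{completely determined} by the data whenever $\lambda_l$ is not an unmovable eigenvalue, and $w_l$ must vanish exactly when $\lambda_l$ is an unmovable eigenvalue that is a zero of $G$ (i.e. $\lambda_l\in\{\mu_1,\ldots,\mu_q\}$), while the interlacing condition I) guarantees each such determined $w_l$ is $>0$. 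Hence the only genuinely free weights are those $w_l$ with $\lambda_l=\mu_i$ for some $i$: $q$ of them in the generic case. These $q$ weights are not entirely free: they must (i) be strictly positive, (ii) sum to the prescribed total $1-\sum_{\lambda_l\notin\{\mu_i\}}w_l$ (one linear constraint, automatically consistent and positive by the residue at $\infty$ / the normalization, using condition II), and — crucially — (iii) be distributed so that $G(\lambda,n,n)$ is realizable as the $nn$-Green function of an $N\times N$ Jacobi matrix perturbed at place $n$, which forces the $q$ unmovable zeros to split between being eigenvalues of $J_{[0,n-1]}$ and of $J_{[n+1,N-1]}$; this is the combinatorial choice producing the $\binom{N-2q-1}{n-q}$ components. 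I would show each such assignment yields a nonempty $(q-1+1)=q$-dimensional open simplex of weight vectors (the $q-1$-dimensional affine simplex from the sum constraint, but one should recheck the dimension count against the claimed $q$ — the extra degree of freedom coming from an unconstrained overall parameter when $K$-related data leaves $\theta$ partially free, cf. \eqref{musss}), diffeomorphic to an open ball, and that distinct assignments give disjoint families because the zero-sets of $G$ differ.

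The case analysis then follows the statement: if $q=0$ there are no free weights, only the combinatorial choice of how the $n$ "left" versus $N-1-n$ "right" degrees of freedom are allotted among the $N-1$ Green-function zeros, giving the finite count $\binom{N-1}{n}$ — here I would invoke condition III (or IV) to see $K$ imposes no further zero. If $K\in\sigma\cap\hat\sigma$ and option IV)b holds, then $\hat N(K)=\theta^2$ and $\hat N'(K)=0$ force $G(K,n,n)=0$ by Lemma \ref{Alternativa}(a), so $K$ is an \emph{additional} unmovable zero beyond the $\mu_i$; it contributes one more free weight and one more combinatorial split, shifting the parameters to $q+1$ free weights and $\binom{N-2q-3}{n-q-1}$ components, exactly as claimed. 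Under IV)a, $G(K,n,n)=\infty$, so $K$ is a pole, its weight is determined (positive), and we are back in the first case with the count $\binom{N-2q-1}{n-q}$.

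The main obstacle I anticipate is step (iii): proving that \emph{every} admissible weight vector — i.e. every choice of the free weights in the open simplex together with every combinatorial assignment of the unmovable zeros to the left/right submatrices — actually arises from a genuine pair of Jacobi matrices of the required perturbed form, and that the assignment is free (no hidden obstruction linking a particular zero to a particular side). This is where one must verify that the reconstructed $J_{[0,n-1]}$ and $J_{[n+1,N-1]}$ have real off-diagonal entries of the correct sign and that gluing them back through the perturbed row/column at place $n$ reproduces $\sigma(J)=\sigma$ and $\sigma(\tilde J)=\hat\sigma$ — essentially a careful bookkeeping with \eqref{feaffi0}–\eqref{feafff} and Lemma \ref{QtildeQ}, checking positivity of all the $b_i^2$ that come out of the continued-fraction/Hankel-determinant reconstruction. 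Establishing that the resulting map from the weight simplex to Jacobi pairs is a diffeomorphism onto a manifold (smoothness and injectivity are easy from the explicit formulas; properness/openness needs the positivity bounds to be uniform) is the technical heart, and I would isolate it as a separate reconstruction lemma before assembling the count.
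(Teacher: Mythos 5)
Your overall framework (reduce everything to the $nn$ Green's function and then reconstruct the pair $(J,\tilde J)$ from it) is the same as the paper's, but the key step --- identifying where the $q$ (resp.\ $q+1$) degrees of freedom and the binomial count actually come from --- is wrong, and the error is internal to your own argument. You correctly observe that comparing residues in \eqref{Funda} determines every weight $w_l$ as an explicit multiple of $\prod_j(\lambda_l-\hat\lambda_j)\big/\big(\prod_{j\ne l}(\lambda_l-\lambda_j)(\lambda_l-K)\big)$, and that $w_l$ is forced to \emph{vanish} precisely when $\lambda_l\in\{\mu_1,\dots,\mu_q\}$. But you then declare those same vanishing weights to be ``the only genuinely free'' parameters. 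They are not free: they are zero. The conclusion of your own residue computation is that the Green's function $\hat G$ of \eqref{hat G} is \emph{completely determined} by the spectral data (this is exactly \eqref{beta}--\eqref{hatGoo} in the paper), so a parametrization by residues of $\hat G$ yields a single point, not a $q$-dimensional family. Your parenthetical worry about ``rechecking the dimension count'' and blaming it on a free overall $\theta$ does not rescue this: by \eqref{musss}, $\theta^2$ is pinned down by $\hat N(\mu_1)$ whenever $q\ne 0$, which is precisely the case where the theorem asserts positive-dimensional families.

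The actual source of the freedom is the non-uniqueness of a Jacobi matrix with a prescribed \emph{interior} $nn$ Green's function. Writing $-\hat G(\l)^{-1}=\l-a_n+b_n^2\,m_+(\l,n)+b_{n-1}^2\,m_-(\l,n)$ as in \eqref{GmmGesztSim}, the $N-q-1$ zeros of $\hat G$ become the poles of the right-hand side and must be apportioned as eigenvalues of $J_{[0,n-1]}$ (poles of $m_-$, $n$ of them) and of $J_{[n+1,N-1]}$ (poles of $m_+$, $N-1-n$ of them). Each unmovable point $\mu_i$ is, by Lemmas \ref{L} and \ref{LL}, an eigenvalue of \emph{both} submatrices, so it is a common pole of $m_+$ and $m_-$; the residue of $-\hat G^{-1}$ there splits as $\beta_l=\beta_l^{(1)}+\beta_l^{(2)}$ with both parts positive, and that splitting is one continuous free parameter per $\mu_i$ --- this is what produces the $q$-dimensional ball, not a choice of weights of $\hat G$. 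The remaining $N-2q-1$ zeros each go to exactly one side, $n-q$ of them to $m_-$, which gives $\binom{N-2q-1}{n-q}$; your description of the binomial coefficient as arising from ``splitting the $q$ unmovable zeros between'' the two submatrices has it backwards, since those $q$ zeros are assigned to both sides and it is the \emph{other} zeros that are distributed (your $q=0$ discussion is consistent with this, which is another sign the general-$q$ bookkeeping went astray). In case IV)b the point $K$ becomes one more forced common zero of $\hat G$, adding one more residue-splitting parameter and shrinking the set of distributable zeros to $N-2q-3$, whence $\binom{N-2q-3}{n-q-1}$ and dimension $q+1$. Finally, the ``technical heart'' you defer --- realizability and the ball structure --- is handled in the paper by citing Theorems 6.2 and 3.6 of \cite{MR1616422}, and the check that $\s(\tilde J)=\hat\s$ follows in one line by comparing \eqref{GG} with \eqref{hat G}; but those points are moot until the parametrization itself is corrected.
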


\begin{remark} Note that when $K\notin\sigma \cup \hat \sigma$ and in case IVb) the parameter $\theta^2$
is uniquely determined by the spectral parameters i--iii), and in case IVa) $\theta^2$ is arbitrary in
$ (0,\hat N(K ))$. In particular, in Theorem \ref{manifolds} this means that if
$\sigma \cup \hat \sigma = \{K\}$ ($q=0$) and $\hat N'(K)=0$, then there are a finite number ${N-1 \choose n}$
of solutions for each $\theta^2\in(0,\hat N(K ))$ {\sl and} a collection of ${N-3 \choose n-1}$
disjoint one-dimensional manifolds of solutions for $\theta^2=\hat N(K)$.
\end{remark}

\begin{proof}

Here we  prove simultaneously Theorems \ref{CondNecSuf} and \ref{manifolds}.

The necessity of the conditions I)--IV) is already proved in previous section: Theorem \ref{Condnes}
proves necessity of  condition I). Assertions \eqref{coin} and \eqref{ora} of Lemma \ref{LL} prove
necessity of
condition II). For condition III) use assertions \eqref{ora2} and \eqref{coincide} of  Lemma \ref{LL}.
First part of condition IV) is  \eqref{o!}. Lemma \ref{Alternativa} a)
implies condition IV)b. Lemma \ref{Alternativa} b) and \eqref{ora1}, imply IV) a.
Now we prove the sufficiency part of theorem \ref{CondNecSuf}
and theorem \ref{manifolds} by finding all pairs of Jacobi matrices that have the given spectral data.

Consider the function
\begin{equation}\label{hat G}
\hat G(\l) = \frac{1}{1-\theta^2} \Bigl\{ \frac{\theta^2}{\l-K} - \frac{\hat N(\l)}{\l-K} \Bigr\}
\end{equation}
(compare to formula \eqref{GG}). Let us now prove that this is the Green's function of a Jacobi matrix.
We consider two cases:\\
\textbf{Case A)} $K\not\in \s\cup\tilde\s$ \\

Expanding $\frac{\hat N(\l)}{\l-K}$ in partial fractions we get:
\begin{equation}\label{fractions}
\hat G(\l) = \frac{1}{1-\theta^2} \Bigl\{\frac{\theta^2-\beta _0}{\l-K} - \sum_{j=1}^N\frac{\beta _j}{\lambda -\lambda _j} \Bigr\}
\end{equation}

where
\begin{equation}\label{beta}
\beta _j= \lim_{\lambda \to\lambda_j}\frac{\hat N(\l)}{\l-K} (\lambda -\lambda_j)=\frac{\prod_{i=1}^N (\lambda_j-\hat{\lambda}_i)}{\prod_{i\not=j}^N(\lambda_j-{\lambda}_i)(\lambda_j -K)} \quad\mbox{if}\quad j\not=0
\end{equation} and

\begin{equation}\label{beta1}
\beta _0=\hat N(K)
\end{equation}
 From condition III) and \eqref{beta1} we obtain
 \begin{equation}\label{hatGG}
\hat G(\l) = \frac{1}{1-\theta^2}\sum_{j=1}^N \frac{\beta _j}{\lambda_j -\lambda }
\end{equation}
 Now, from \eqref{hat G} and \eqref{hatGG} we find that
\begin{equation}\label{uno}
\lim_{\lambda \to\infty}\lambda \hat G(\l)=-1 \quad\mbox{and}\quad \lim_{\lambda \to\infty}\lambda \hat G(\l)
  =-\frac{1}{1-\theta^2}\sum_{i=1}^N \beta _i
  \end{equation}
respectively. Therefore
\begin{equation}\label{dos}
\frac{1}{1-\theta^2}\sum_{i=1}^N \beta _i =1
\end{equation}
and
\begin{equation}\label{hatGo}
\hat G(\lambda )=\sum_{i=1}^N\frac{\alpha  _i}{\lambda_i -\lambda } \quad \mbox{with}\quad \sum_{i=1}^N\alpha _i=1
\end{equation}
where $\alpha_i:=\frac{\beta _i}{1-\theta ^2} $.\\
From \eqref{beta} we know that $\beta _j=0$ if and only if $\lambda _j\in \sigma \cap \hat\sigma $. From condition III) it follows that
the sum in \eqref{hatGo} has $k:=N-q$ terms and $ N-\tilde n\leq k\leq N$.  Using the expression \eqref{beta} and the interlacing condition I), it follows
that if  $\alpha _i\not=0$ then $\alpha _i>0$.
Therefore

\begin{equation}\label{hatGoo}
\hat G(\lambda )=\sum_{l=1}^k\frac{\alpha  _{i_l}}{\lambda_{i_l} -\lambda } \quad \mbox{with}\quad \sum_{l=1}^k\alpha _{i_l}=1 \quad \mbox{and}\quad
\alpha _{i_l}>0
\end{equation}
 According to theorem 6.2 of \cite{MR1616422}, \eqref{hatGoo} implies that  $\hat G(\lambda )$ has the form of a $nn$ Green's function for at least one Jacobi matrix $J$.
Now we shall describe the family of matrices which have $\hat G(\lambda )$ as its $nn$ Green's function and moreover have spectrum equal to the given
set $\sigma $.

All finite Jacobi operators with a  $nn$ Green's function given by  $\hat G(\lambda )$ in \eqref{hatGoo} have the same eigenvalues $\lambda_{i_l} , l=1,...,k$,
but the other $N-k$ eigenvalues may change.
To study the family of operators which correspond to a given Green'function, we
shall use the theory of interior inverse problems for finite Jacobi matrices, developed in \cite{MR1616422}, theorems 6.1-4.
The key formula of this method is (2.18) in \cite{MR1616422}:
\begin{equation}\label{GmmGesztSim}
 -\langle \delta _n, (J-z)^{-1} \delta _n\rangle^{-1} = \l - a_n  + b_n^2 m_+(\l,n) + b_{n-1}^2 m_-(\l,n),
\end{equation}
where $$m_+(\l,n):= \langle \delta _{n+1}, (J_{[n+1,N-1]}-z)^{-1} \delta _{n+1}\rangle$$ $$m_-(\l,n):= \langle \delta _{n-1}, (J_{[0,n-1]}-z)^{-1} \delta _{n-1}\rangle$$ are the so called
m-Weyl functions. The matrices  $J_{[n+1,N-1]}$ and  $J_{[0,n-1]}$ were defined in Lemma \ref{Green}.
 It happens that $m_+(\l,n)$ determines $J_{[n+1,N-1]}$ uniquely (see Remark \ref{exp}),  and has the form
 \begin{equation}\label{m+}
m_+(\l,n)=\sum_{i=1}^{N-1-n}\frac{\gamma _i}{f_i-\lambda }, \quad \gamma _i>0
 \end{equation}
where $\sum_{i=1}^{N-1-n}\gamma _i=1$ and the $f_i$ are the eigenvalues of $J_{[n+1,N-1]}$. Any sum of this form is legal for $m_+(\l,n)$.
 Similarly  $m_-(\l,n)$ determines uniquely $J_{[0,n-1]}$ and has the form
 \begin{equation}\label{m-}
m_-(\l,n)=\sum_{i=1}^{n}\frac{\kappa _i}{g_i-\lambda }, \quad \kappa  _i>0
 \end{equation}
  where  $\sum_{i=1}^{n}\kappa  _i=1$ and $g_i$ are the eigenvalues of $J_{[0,n-1]}$. Any such sum is allowed for $m_-(\l,n)$.

 The reconstruction procedure is as follows:\\
 Given $\hat G(\lambda )$ as in \eqref{hatGoo} then
   \begin{equation}\label{rec}
  -\hat G(\lambda )^{-1}=z-a +\sum_{l=1}^{k-1}\frac{\beta _l}{\nu  _l-\lambda }
   \end{equation}
where $\nu  _1<\nu_2< ...<\nu _{k-1}$ are the zeros of  $\hat G(\lambda )$. The numbers $\nu _l,a $ and $\beta _l>0$ are determined by
$\alpha _{i_l}$ and $\lambda _{i_l}$ in the expression \eqref{hatGoo}. Now we have to write the right side of equality \eqref{rec} in the form
of the right side of equality  \eqref{GmmGesztSim} for some $a_n, b_n^2, b_{n-1}^2$ and $m_+(\l,n),m_-(\l,n)$ of the form described in
\eqref{m+} and \eqref{m-}. If we do this, then we would have according to \eqref{GmmGesztSim} that $\hat G(\lambda )$ is a $nn$ Green's function
for a matrix $J$ with corresponding entries  $a_n, b_n, b_{n-1}$ and submatrices $J_{[n+1,N-1]}$ and $J_{[0,n-1]}$ determined by the m-Weyl functions.
 From condition II) we know that the $q$ points of $\sigma \cap \tilde\sigma$ are among  zeros of $\hat G(\lambda )$. We will construct $m_+(\l,n)$ and $m_-(\l,n)$ in such a way
these $q$ points are common poles of them. The other $k-1-q$ zeros of $\hat G(\lambda )$ will be poles of just one of the m-Weyl functions.
Since $ m_-(\l,n)$ has $n$ poles, then there are
\begin{equation}\label{choice}
 {k-1-q \choose n-q}={N-2q-1 \choose n-q}
\end{equation}
 possibilities of distributing non common poles.
For each of the $q$ common poles $\mu _l$ we pick a decomposition  $\beta _l= \beta _l^{(1)}+\beta _l^{(2)}$such that the addends  $\frac{\beta _l^{(i)}}{\mu _l-\lambda }, i=1,2$
appear each in one of the sums \eqref{m+}, \eqref{m-}. So we have $q$ parameters which generate a manifold for each one of the possible choices \eqref{choice}.
That these manifolds are diffeomorphic to a sphere follows from theorem 3.6 of \cite {MR1616422}.
 Since we have constructed the m-Weyl functions, the matrices  $J_{[n+1,N-1]}$ and  $J_{[0,n-1]}$ are determined. We can fix
 $a=a_n$ and $$b_n^2= \sum_{ l \,so \,\mu _l \, is\, an f_i\atop \mu _l \, is \,not\, an\, g_i} \beta _l
+\sum_{ l \,so \,\mu _l \, is\, an f_i\atop and\, an\, g_i} \beta _l ^{(1)}
\quad \mbox{and}\quad b_{n-1}^2=\sum_{ l \,so \,\mu _l \, is\, an \,g_i\atop\mu _l \, is \,not\, an\, f_i} \beta _l+\sum_{ l \,so \,\mu _l \, is\, an f_i\atop and\, an\, g_i} \beta _l ^{(2)} $$ where $f_i$  and $g_i$ are defined in \eqref{m+}, \eqref{m-}. So given $\hat G(\lambda )$ we have constructed
 a family of Jacobi matrices so that each member of it, has $\hat G(\lambda )$ as its $nn$ Green's function and its eigenvalues are exactly the points of
 $\sigma $. We have only to prove that if we perturb one of these matrices $J$, then the perturbation $\tilde J$ has spectrum exactly $\hat\sigma$.
Consider then the Jacobi matrix $\tilde J$, obtained from $J$
by formulas \eqref{J pert} with  $M:=(\theta^{-2}-1) K$, where $K$ and $\theta ^2$ are as
defined above in theorems \ref {CondNecSuf}. Then, by formulas
\eqref{GG} and \eqref{hat G},
$$
\frac{1}{1-\theta^2} \Bigl\{ \frac{\theta^2}{\l-K} - \frac{\hat N(\l)}{\l-K} \Bigr\}
= \frac{1}{1-\theta^2} \Bigl\{ \frac{\theta^2}{\l-K} - \frac{N(\l)}{\l-K} \Bigr\}
$$
which implies $\hat N(\l)=N(\l)$ and $\prod(\l-\tilde\l_j)\equiv\prod(\l-\hat\l_j)$, thus
$$
\hat\s=\{\hat \l_1,\ldots,\hat \l_N\}=\{\tilde\l_1,\ldots,\tilde\l_N\}=\s(\tilde J).$$
 This proves all assertions of theorems \ref {CondNecSuf} and \ref{manifolds} for case A)$K\not\in \s\cup\tilde\s$.\\

\textbf{  Case B)}$K\in\sigma \cup \hat \sigma$.\\
In this case from condition IV) we know that there exist $j_0\in \{1,...,N\}$ such that $K=\lambda _{j_0}=\hat\lambda _{j_0}$.
Then \eqref{fractions} takes the form:  \\

\begin{equation}\label{fract}
\hat G(\l) = \frac{1}{1-\theta^2} \Bigl\{\frac{\theta^2-\beta {j_0}}{\l-\lambda _{j_0}} - \sum_{j\not=j_0}^N\frac{\beta _j}{\lambda -\lambda _j} \Bigr\}
\end{equation}
 where
 \begin{equation}\label{betaa}
\beta _j=\frac{\prod_{i\not=j_0}^N (\lambda_j-\hat{\lambda}_i)}{\prod_{i\not=j}^N(\lambda_j-{\lambda}_i)} \quad\mbox{if}\quad j\not=j_0
\end{equation} and

\begin{equation}\label{betaa1}
\beta _{j_0}=\hat N(K)
\end{equation}
 Analogously to what was done in (\ref{uno}), (\ref{dos}) and \eqref{hatGo} we get
 \begin{equation}\label{doos}
\frac{1}{1-\theta^2}\sum_{i=1}^N \beta _i -\frac{\theta^2}{1-\theta^2} =1
\end{equation}
and
\begin{equation}\label{hatGor}
\hat G(\lambda )=\sum_{i=1}^N\frac{\alpha  _i}{\lambda_i -\lambda } \quad \mbox{with}\quad \sum_{i=1}^N\alpha _i=1
\end{equation}
where $\alpha_i:=\frac{\beta _i}{1-\theta ^2} $ if $i\not=j_0$ ,\quad $\alpha _{j_0}=\frac{\beta _{j_0}-\theta ^2}{1-\theta^2}$.
For $i\not=j_0$ we get using \eqref{betaa}  that $\alpha _i=0$ if and only if $\lambda _i=\hat\lambda _i$, that is exactly $q$ times
according to \eqref{mus}.Using the interlacing condition I) and \eqref{betaa} it follows that  $\alpha _i>0$ if  $\alpha _i\not=0$.
 Therefore
 \begin{equation}\label{hatGora}
\hat G(\lambda )=\sum_{l=1}^{k-1}\frac{\alpha  _{i_l}}{\lambda_{i_l} -\lambda }+\frac{\alpha _{j_0}}{\lambda _{j_0}-\lambda }
\quad \mbox{with}\quad \sum_{l=1}^{k-1}\alpha _{i_l} +\alpha _{j_0} =1,\quad \alpha _{i_l}>0
\end{equation}
Where $k:=N-q$. Now we have two options:\\
 i) If situation IV)a happens, then we get exactly \eqref{hatGoo} since $\alpha _{j_0}>0$, $K$ is a pole of $\hat G(\lambda )$ and the analysis is completely analogous to
 Case A).\\
 i))If IV)b holds then $\alpha _{j_0}=0$ , then
 \begin{equation}\label{hatGoraa}
\hat G(\lambda )=\sum_{l=1}^{k-1}\frac{\alpha  _{i_l}}{\lambda_{i_l} -\lambda }
\quad \mbox{with}\quad \alpha _{i_l}>0
\end{equation}
 and $K$ is a zero of $\hat G(\lambda )$. So we have $q+1$ fixed zeros. An analysis similar to the one for Case A) gives
 $${k-2-(q+1) \choose n-(q+1)}={N-2q-3 \choose n-q-1}$$
 possible choices and then $q+1$ parameters.

\end{proof}

\begin{remark}\label{exp}
To find the entries of the matrices $J_{[0,n-1]}$ and $J_{[n+1,N-1]}$ one could use the continuous fraction expansions of the Weyl functions:

\begin{equation} \label{pasita}
 -m_-(\l,n)^{-1}= \lambda - a_{n-1} -
{b_{n-2}^2 \over\displaystyle \lambda - a_{n-2} -
 {\strut b_{n-3}^2 \over\displaystyle \lambda - a_{n-3} - } }
{\atop \displaystyle {\strut
 \rule{0mm}{2mm} \makebox[5mm]{} \atop \ddots
{\atop \displaystyle {\strut
 b_0^2 \over\displaystyle \lambda - a_0 }. } } }
\end{equation}
and
\begin{equation} \label{pasita}
 -m_+(\l,n)^{-1}= \lambda - a_{n+1} -
{b_{n+1}^2 \over\displaystyle \lambda - a_{n+2} -
 {\strut b_{n+2}^2 \over\displaystyle \lambda - a_{n+3} - } }
{\atop \displaystyle {\strut
 \rule{0mm}{2mm} \makebox[5mm]{} \atop \ddots
{\atop \displaystyle {\strut
 b_{N-2}^2 \over\displaystyle \lambda - a_{N-1} }. } } }
\end{equation}

These expansions are unique and can be obtained using Euclid's algorithm.
We have to choose the negative square root of the $b_{i}^2$ since the off diagonal terms
of our Jacobi matrices are negative, see formula \eqref{klm}.

\end{remark}

\begin{acknowledgments}
  We thank Luis Silva and Laura Oropeza for useful comments and stimulating discussions.
  We gratefully acknowledge Luis Silva's help with \LaTeX.
 M. Kudryavtsev thanks IIMAS-UNAM for financial support which made possible several visits to UNAM.
\end{acknowledgments}


\begin{thebibliography}{10}

\bibitem{Ak}
Akhiezer N.I.: \emph{The Classical Moment Problem}. Oliver and Boyd,Edinburg and London. (1965).

\bibitem{Chu-G}
Chu, M.T.; Golub, G.H.: Inverse eigenvalue problems: theory, algorithms, and applications. Numerical Mathematics and Scientific Computation. Oxford University Press, New York, (2005).

\bibitem{MR1616422}
Gesztesy, F. and Simon, B.: {$m$}-functions and inverse spectral analysis for
  finite and semi-infinite {J}acobi matrices.
\newblock \emph{J. Anal. Math.} \textbf{73} (1997), 267--297.

\bibitem{Glad}
Gladwell G.M.L.: \emph{Inverse Problems in Vibration}. Second Edition.
\newblock \emph {Solid Mechanics and its Applications} \textbf {119} (2004).
Kluwer Academic Publishers.

\bibitem{Mar}
Marchenko V.A. y Misyura T.V. : Se\~{n}alamientos
metodol\'{o}gicos y did\'{a}cticos al tema: Problemas inversos de la
teoria espectral de operadores  de dimensi\'{o}n
finita. Monografias IIMAS-UNAM \textbf{12} No. 28 M\'{e}xico, octubre 2004.

\bibitem{NU1}
Nylen P. and Uhlig F.: Inverse eigenvalue problem: existence of special spring-mass systems.
\newblock \emph{Inverse Problems} \textbf{13} (1997), 1071--1081.

\bibitem{NU2}
Nylen P. and Uhlig F.: Inverse Eigenvalue Problems Associated with Spring -Mass Systems
\newblock \emph{Linear Algebra and its Applications} \textbf{254} (1997), 409-425.


\bibitem{Ram}
Ram, Y.M.: Inverse Eigenvalue Problem for a Modified Vibrating System, SIAM Appl. Math {\bf 53} (1993), 1763--1775



\bibitem{Har}
 Spletzer M., Raman A., Sumali H. and  Sullivan J.P.: Highly sensitive mass detection and identification using vibration localization in coupled microcantilever arrays. Applied Physics Letters \textbf{92},  (2008) 114102.

\bibitem{Sple}
Spletzer M., Raman A., Wu A.Q.  and Xu X.: Ultrasensitive mass sensing using mode localization in coupled microcantilevers
Applied Physics Letters \textbf{88}, 254102 (2006).

\end{thebibliography}
\end{document}